\newtheorem{theorem}{Theorem}
\newtheorem{proposition}[theorem]{Proposition}
\newtheorem{lemma}[theorem]{Lemma}
\theoremstyle{definition}
\newtheorem{example}[theorem]{Example} 
\newtheorem{remark}[theorem]{Remark} 
\newtheorem{algorithm}[theorem]{Algorithm}
\newcommand{\CC}{{\mathbb C}}
\newcommand{\NN}{{\mathbb N}}
\newcommand{\PP}{{\mathbb P}}
\newcommand{\RR}{{\mathbb R}}
\newcommand{\TT}{{\mathbb T}}
\newcommand{\ZZ}{{\mathbb Z}}
\newcommand{\bbI}{\mathbbm{1}}
\newcommand{\ev}{\mathrm{ev}}
\newcommand{\calA}{{\mathcal A}}
\newcommand{\calB}{{\mathcal B}}
\newcommand{\calC}{{\mathcal C}}
\newcommand{\calF}{{\mathcal F}}
\newcommand{\calG}{{\mathcal G}}
\newcommand{\calL}{{\mathcal L}}
\newcommand{\QS}{\mathcal{QS}}
\newcommand{\calV}{{\mathcal V}}
\newcommand{\calX}{{\mathcal X}}
\newcommand{\calY}{{\mathcal Y}}
\newcommand{\sect}{Section}
\newcommand{\theo}{Theorem}
\newcommand{\chap}{Chapter}
\newcommand{\algo}{Algorithm}
\newcommand{\lemm}{Lemma}
\newcommand{\pr}{{p}{r}}
\DeclareMathOperator{\Proj}{{\rm Proj}}
\DeclareMathOperator{\Hom}{{\rm Hom}}
\DeclareMathOperator{\ini}{{\rm in}}
\DeclareMathOperator{\lt}{{\rm lt}}
\DeclareMathOperator{\gr}{{\rm gr}}
\DeclareMathOperator{\sat}{{\rm sat}}
\DeclareMathOperator{\NO}{{\rm NO}}
\newcommand{\defcolor}[1]{{\color{RoyalBlue}#1}}
\newcommand{\demph}[1]{\defcolor{{\sl #1}}}
\title{Numerical homotopies from Khovanskii bases}
\author{M.~Burr}
\address{Michael Burr, School of Mathematical and Statistical Sciences, 
            Clemson University, 220 Parkway Drive, Clemson, SC 29634-0975,  USA}
\email{burr2@clemson.edu}
\urladdr{https://cecas.clemson.edu/\~{}burr2/}
\author{F.~Sottile}
\address{Frank Sottile, Department of Mathematics,
         Texas A\&M University, College Station, Texas 77843,  USA}
\email{sottile@math.tamu.edu}
\urladdr{http://www.math.tamu.edu/\~{}sottile}
\author{E.~Walker}
\address{Elise Walker, Department of Mathematics,
         Texas A\&M University, College Station, Texas 77843,  USA}
\email{walkere@math.tamu.edu}
\urladdr{http://www.math.tamu.edu/\~{}walkere}
\thanks{Research of Sottile and Walker supported in part by NSF grant DMS-1501370 and
        Simons Collaboration Grant for Mathematics number 636314}
\thanks{Research of Burr supported in part by NSF grants CCF-1527193 and DDF-1913119}
\thanks{This paper began while the authors were visiting the ICERM for the semester program on Nonlinear Algebra in Fall 2018}
\subjclass[2010]{13P15, 14M25, 68W30}
\keywords{Newton-Okounkov bodies, toric degenerations, numerical continuation, Khovanskii bases, subalgebra bases}
\begin{document}

\begin{abstract}
  We present numerical homotopy continuation algorithms for solving systems of equations on a variety
  in the presence of a finite Khovanskii basis.  These take 
  advantage of Anderson's flat degeneration to a toric variety.  When Anderson's
  degeneration embeds into projective space, our  algorithm is a special case of a
  general toric two-step homotopy algorithm.  
  When Anderson's degeneration is embedded in a weighted projective space, we explain how to lift to a projective space and
  construct an appropriate modification of the toric homotopy. 
  Our algorithms are illustrated on several examples using \texttt{Macaulay2}.
\end{abstract}
\maketitle


We consider the problem of computing the isolated solutions to the system
 \begin{equation}\label{Eq:generalSystem}
   f_1(z)=f_2(z)=\dotsb=f_d(z)=0,
 \end{equation}
where $f_1,\dots,f_d$ are general members of a finite-dimensional vector space $V$ of rational 
functions on a complex algebraic variety $X$ of dimension $d$.
Kaveh-Khovanskii~\cite{KaKha,KaKhb} and Lazars\-feld-Musta\c{t}\v{a}~\cite{LaMu}
show that the number of solutions is the normalized volume of the Newton-Okounkov body associated to $V$.
The accompanying theory extends many uses of Newton polytopes from toric varieties to general algebraic varieties.  
This theory lends itself to algorithms when $V$ has a finite Khovanskii basis~\cite{KM_tropical}.

The evaluation of functions in $V$ induces the rational Kodaira map $\varphi\colon X\dasharrow\PP(V^*)$.
The solutions to System~\eqref{Eq:generalSystem} are the pull backs  
of the points of a linear section $\varphi(X)\cap L$ along $\varphi$.
When $V$ has a finite Khovanskii basis, Anderson~\cite{Anderson} shows that (the closure of)
$\varphi(X)$ has a flat degeneration to a toric variety associated to the Newton-Okounkov body of $V$.
We describe numerical algorithms for computing a linear section based on this toric
degeneration and the polyhedral homotopy algorithm~\cite{HS95,VVC}.
Solving System~\eqref{Eq:generalSystem} then requires computing the pull back of the linear section.
 
Our numerical algorithms for computing a linear section are based on homotopy continuation~\cite{Morgan}.
This approach uses path tracking from numerical analysis to compute the solutions to a target system $F$ given all
solutions to a start system $G$ along with a homotopy interpolating the two systems.
Anderson's flat toric degeneration gives a homotopy where the start system is a linear section of a toric variety and the
target system is a linear section of $\varphi(X)$.
Flatness guarantees that the number of solutions to the start and target systems are equal.
Thus the homotopy is optimal in the sense that no extraneous paths are tracked.

Our start system is a linear section of a toric variety, which may be solved using the optimal
polyhedral homotopy algorithm~\cite{HS95,VVC}.
Beyond those derived from a finite Khovanskii basis, there are many instances in which a projective variety has a flat
degeneration into a toric variety. 
In Section~\ref{S:TDHC}, we describe an optimal toric two-step homotopy algorithm for solving systems given a toric
degeneration in an ambient projective space. We also present examples of such flat degenerations into toric varieties.

When the Khovanskii basis is a subset of $V$, 
Anderson's degeneration may be embedded in the projective space $\PP(V^*)$.
In Section~\ref{S:Khovanskii}, we present the Khovanskii homotopy algorithm, which uses this embedding and the toric
two-step homotopy to solve System~\eqref{Eq:generalSystem}.
For a general Khovanskii basis, Anderson's degeneration may only be embedded in a weighted projective space and
System~\eqref{Eq:generalSystem} is not a pull back of a general linear section.
In Section~\ref{S:WPS}, we describe how to adapt the toric algorithm to this general case of a Khovanskii basis.

We end each section with a concrete example to illustrate our techniques and algorithms.
These examples are computed with \texttt{Macaulay2} scripts~\cite{M2}, which are archived on \texttt{GitHub}:
 \begin{center}
    \url{https://github.com/EliseAWalker/KhovanskiiHomotopy/}
 \end{center}
We use the \texttt{NumericalAlgebraicGeometry} package~\cite{NAG4M2} to call the 
packages
\texttt{Bertini}~\cite{Bertini} and  \texttt{PHCpack}~\cite{PHCpack} for user-defined homotopies and the polyhedral
homotopy, respectively.  We discuss practical issues that arise from using these software packages in Section~\ref{S:Exp}.

\section{Homotopy continuation and toric degenerations}\label{S:TDHC}

Numerical homotopy continuation computes the solutions to a system $F$ of polynomial equations given the solutions to a 
related
system $G$.
This method uses numerical path tracking along a homotopy, which is a family of systems containing both $F$ and $G$.
We begin by reviewing homotopy continuation and then discuss how flat families are a source of homotopies.
When a flat family is a degeneration into a toric variety and the system $G$ is a linear section of that toric
variety, we describe the toric two-step homotopy algorithm whose first step is a polyhedral homotopy.

\subsection{Numerical homotopy continuation}

Numerical homotopy continuation is a method for solving a system $F(x)=0$ of polynomial equations~\cite{Morgan}.
It uses a one-parameter family $\defcolor{H(x;t)}$ (in $t\in\CC_t$) of polynomial systems called a \demph{homotopy}. 
Numerical homotopy continuation mandates that the \demph{start system} $\defcolor{G(x)}\vcentcolon=  H(x;0)$ has known
solutions and the solutions to the \demph{target system} $F(x)$ are among those to $H(x;1)$.
We further require that $H(x;t)$ defines a curve \defcolor{$C$} in $\CC^n_x\times\CC_t$ (or $\PP^n_x\times\CC_t$)
with $t=0$ a regular value of the projection $C\to \CC_t$.
These assumptions imply that there are enough solutions to the start system so that along
a general path in $\CC_t$, the solutions to the start system $H(x;0)$ deform to the solutions to the system $H(x;1)$.
The homotopy is \demph{optimal} if every solution to the
start system deforms to a distinct solution to the target system
so that no extraneous paths are tracked. 

Given a homotopy $H$, let $\gamma$ be a general arc in $\CC_t$ between $0$ and $1$.
The restriction of $H$ to $\gamma$ is a family of smooth arcs. 
Standard numerical path tracking algorithms starting with solutions to $G$ can compute the set of all solutions to $H(x;1)$, 
which includes all solutions to $F$.
When $t=1$ is also a regular value of the projection $C\to \CC_t$ and $H(x;1)=F(x)$, the homotopy is optimal.

Several software packages implement numerical homotopy continuation methods.
These include \texttt{Bertini}~\cite{Bertini}, \texttt{NumericalAlgebraicGeometry}~\cite{NAG4M2},
\texttt{HomotopyContinuation.jl}~\cite{BreidingTimme}, 
\texttt{HOM4PS}~\cite{HOM4PS}, and \texttt{PHCpack}~\cite{PHCpack}.
The first three implement user-defined homotopies, and our computational examples use the user-defined homotopy method
provided in \texttt{Bertini}.
The last three packages implement the polyhedral homotopy method~\cite{HS95,VVC}, and we use \texttt{PHCpack} 
for solving systems of sparse polynomials coming from linear sections of toric varieties.


\subsection{Homotopies from flat families}

Suppose that $X\subset\PP^n$ is a subvariety of dimension $d$.
A \demph{linear section} of $X$ is a transverse intersection $X\cap L$ where $L\subset\PP^n$ is a linear subspace of
codimension $d$ so that $X\cap L$ consists of $\deg X$ points. 

Let $\defcolor{\calX}\subset\PP^n\times\CC$ be a variety with a surjective map $\pi\colon\calX\to\CC$.
Then $\pi$ realizes $\calX$ as a family of projective varieties over $\CC$ where a point $t\in\CC$ corresponds to the
fiber $\defcolor{\calX_t}\vcentcolon= \pi^{-1}(t)\subset\PP^n$. 
There is an open subset $U\subset\CC$ such that $\calX$ is flat over $U$.
Flatness is an algebraic property
which captures the geometric notion that the fibers $\calX_t$ vary continuously with $t\in U$ \cite[\chap\ 6]{Eisenbud}.
For example, the fibers of a flat family all have the same dimension and degree.

Suppose that the fibers of a flat family $\calX$ over $U\subset\CC$ have dimension $d$ and that $0,1\in U$.
Let $L\subset\PP^n$ be a general linear subspace of codimension $d$ which meets both $\calX_0$ and $\calX_1$ transversally
so that  $\calX_0\cap L$ and $\calX_1\cap L$ are linear sections.
Let $H(x;t)$ be finitely many polynomials defining $\calX$ and $d$ linear forms defining $L$. 
We call $H(x;t)$ a \demph{linear section homotopy}.  

\begin{proposition}[Linear section homotopy]
  \label{prop:FlatHomotopy}
  A linear section homotopy $H(x;t)$ is an optimal homotopy with start
  system  $\calX_0\cap L$ and target system $\calX_1\cap L$.  
\end{proposition}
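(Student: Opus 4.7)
The plan is to verify directly the three conditions that define an optimal homotopy in the preceding discussion: (i) $H(x;t)$ cuts out a curve $C \subset \PP^n_x \times \CC_t$, (ii) both $t = 0$ and $t = 1$ are regular values of the projection $\pi\colon C \to \CC_t$, and (iii) every solution of the start system $\calX_0 \cap L$ deforms to a distinct solution of the target system $\calX_1 \cap L$. Concretely, $H(x;t)$ cuts out the variety $C = \calX \cap (L \times \CC_t)$ inside $\PP^n_x \times \CC_t$, so I will work with this description throughout.

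First I would unpack the dimensions. Since $\calX$ is flat over $U$ with $d$-dimensional fibers and $U$ has dimension $1$, the restriction $\calX|_U$ has pure dimension $d+1$. Intersecting with $L \times \CC_t$, which has codimension $d$ in $\PP^n_x \times \CC_t$, and using that $L$ is general (so that a Bertini-type argument applies in the ambient projective space), gives that $C \cap \pi^{-1}(U)$ is $1$-dimensional. The transversality hypotheses at $t = 0$ and $t = 1$ then say exactly that the fibers $\calX_0 \cap L$ and $\calX_1 \cap L$ are zero-dimensional and reduced, so $0$ and $1$ are regular values of $\pi$, establishing (i) and (ii) for the purposes of path tracking.

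For (iii), the key input is that the Hilbert polynomial, and hence the degree, of the fibers of a flat family is constant on $U$. Therefore $\deg \calX_0 = \deg \calX_1$, and transversality of $L$ with these two fibers gives
\[
  |\calX_0 \cap L| \;=\; \deg \calX_0 \;=\; \deg \calX_1 \;=\; |\calX_1 \cap L|.
\]
Combined with the fact that $\pi\colon C \to \CC_t$ has finite fibers of the same cardinality over $0$ and $1$, standard numerical path tracking along a generic arc $\gamma$ from $0$ to $1$ in $U$ (chosen to avoid the discriminant locus of $\pi$, which is a proper closed subset of $\CC_t$) produces a bijection between $\calX_0 \cap L$ and $\calX_1 \cap L$. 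This is precisely optimality: no extraneous paths are tracked.

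The main obstacle I expect is the careful bookkeeping around the non-flat locus of $\pi$: one must ensure that the arc $\gamma$ used for path tracking stays inside $U$ and also avoids the finitely many critical values of $\pi$ inside $U$. This is not a serious difficulty because the forbidden set is a proper closed (hence finite) subset of $\CC_t$, so a general arc from $0$ to $1$ avoids it; but it does require invoking genericity of $\gamma$, not just genericity of $L$, to complete the argument.
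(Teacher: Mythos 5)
Your proof is correct and follows essentially the same route as the paper's: exhibit the solution curve $C$, verify that $t=0$ and $t=1$ are regular values of $C\to\CC_t$, and invoke constancy of degree under flatness to equate $|\calX_0\cap L|$ and $|\calX_1\cap L|$. You spell out a few steps the paper leaves implicit (the dimension count via Bertini and the genericity of the arc $\gamma$), and you take $C=\calX\cap(L\times\CC_t)$ whereas the paper restricts to the union of components meeting both $\calX_0\cap L$ and $\calX_1\cap L$, but these are cosmetic differences rather than a genuinely different argument.
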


\begin{proof}
 Let \defcolor{$C$} be the union of components of $\calX\cap L$ that contain both $\calX_0\cap L$
 and $\calX_1\cap L$.
 Since these intersections are zero-dimensional, $C$ is a
 curve.  
 Furthermore, both $t=0$ and $t=1$ are regular values of the projection $C\to\CC_t$.
 Thus, $H$ is a homotopy.
 Flatness implies that $\calX_0\cap L$ and $\calX_1\cap L$ have the same number of points so that the homotopy $H$ is
 optimal. 
\end{proof}

A linear section is part of a \demph{witness set},
which is a fundamental data structure in numerical algebraic geometry~\cite{Bertini_Book}.
Specifically, a witness set for a $d$-dimensional variety $X\subset\PP^n$ is a triple $(F,L,X\cap L)$ where $F$ is a set of
homogeneous polynomials (forms) on $\PP^n$ defining $X$,
$L$ is a set of $d$ general linear forms 
defining a linear subspace (which is also written $L$), and $X\cap L$ is the corresponding linear
section. 

In the linear section homotopy in Proposition \ref{prop:FlatHomotopy}, 
$L$ is a fixed general linear space
and the variety $\calX_t$ moves.
Our algorithms sometimes require linear spaces which are not general.
For this, we use a homotopy where the variety is fixed, but the linear section moves, 
which is described in the following basic algorithm for moving a witness set:

\begin{algorithm}[Witness Set Homotopy]\ 
   \label{Alg:WSH}

   {\bf Input:}
   A witness set $(G,L,X\cap L)$ for $X$ and a codimension $d$ linear subspace $L'$ such 
   
   \mbox{{\color{white}{\bf Input:}}} that $X\cap L'$ is finite.

   {\bf Output:} The points of $X\cap L'$.
   
\pagebreak
   {\bf Do:} 
   \begin{enumerate}[(\roman*)] 
   \item Let $\defcolor{H}\vcentcolon= (G, tL' + (1-t)L)$, a homotopy with start system $X\cap L$ and
     target system $X\cap L'$.
   \item Use path tracking starting from the points of $X\cap L$ to compute the points of  $X\cap L'$. 
   \end{enumerate}
   
\end{algorithm}

%
\subsection{Toric degenerations}

A \demph{toric degeneration} $\calX\subset\PP^n\times\CC_t$ is a flat family over $\CC_t$ whose special fiber $\calX_0$ is
a toric variety (see \cite{CLS} for details on toric varieties).  
Given a toric degeneration, the linear section homotopy leads to the toric two-step homotopy which we describe in
Algorithm~\ref{Alg:TDA}.

A vector $\defcolor{\alpha}=(a_1,\dotsc,a_d)\in\ZZ^d$ is the exponent of a Laurent monomial
$\defcolor{z^\alpha}\vcentcolon= z_1^{a_1}\dotsb z_d^{a_d}$, which is a function on the algebraic torus
$(\CC^\times)^d$, where  $\defcolor{\CC^\times}\vcentcolon=\CC\smallsetminus\{0\}$. 
Suppose that $\calA$ is a $d\times(n{+}1)$ integer matrix whose columns
$\{\alpha_0,\dotsc,\alpha_n\}$ are a finite set of $n{+}1$ integer vectors.
The  \demph{toric Kodaira map} $\varphi_\calA\colon(\CC^\times)^d\rightarrow\PP^n$ is defined by
$\defcolor{\varphi_\calA(z)}\vcentcolon=[z^{\alpha_0},\dotsc,z^{\alpha_n}]$, and the \demph{toric variety}
\defcolor{$X_\calA$} is the closure of its image. 
The homogeneous ideal of $X_\calA$ is spanned by the following 
set of binomials~\cite[\chap\ 4]{GBCP}:
\[
  \bigl\{ x^u - x^v :  \sum \alpha_i u_i = \sum \alpha_i v_i \mbox{\ and\ } \sum u_i = \sum v_i \bigr\}.
\]

We describe a variant of this Kodaira map for translated toric varieties.
The torus $(\CC^\times)^{n+1}$ acts on $\PP^n$ by independently scaling each coordinate.
This action factors through the quotient of $(\CC^\times)^{n+1}$ by its diagonal, \defcolor{$\Delta\CC^\times$}.
This quotient is the dense torus $\defcolor{\TT}$ of $\PP^n$.
For a point $p\in\TT$, let \defcolor{$p.X_\calA$} be the translation of the toric variety $X_\calA$ by $p$. 
We note that $p\in p.X_\calA$ and that $p.X_\calA=p'.X_\calA$ for any $p'\in p.X_\calA\cap\TT$.
The ideal of $p.X_\calA$ is spanned by the following set of binomials, which depend on $p$:
 \begin{equation*}
  \bigl\{ p^vx^u - p^ux^v :  \sum \alpha_i u_i = \sum \alpha_i v_i \mbox{\ and\ } \sum u_i = \sum v_i \bigr\}.
 \end{equation*}
Since $p.X_\calA\simeq X_\calA$, we also call $p.X_\calA$ a toric variety and the ideal of $p.X_{\calA}$ a \demph{toric ideal}.
Writing $p=[p_0,\dotsc,p_n]$, the corresponding toric Kodaira map for $p.X_{\calA}$ is
 \begin{equation}\label{Eq:translatedToricKodaira}
  \defcolor{\varphi_{p,\calA}(z)} = [p_0z^{\alpha_0},\dotsc,p_nz^{\alpha_n}].
 \end{equation}

A linear section $p.X_\calA\cap L$ of the toric variety $p.X_\calA$ pulls back along 
$\varphi_{p,\calA}$ to the following system of sparse polynomials on $(\CC^\times)^d$ whose monomials have
exponents in $\calA$:
 \begin{equation}\label{Eq:sparse}
   g_1(z)=g_2(z) =\dots=g_d(z)=0.
 \end{equation}
 The polyhedral homotopy algorithm is an optimal homotopy for solving this system of polynomials~\cite{HS95,VVC}. 

 Let $\calX\to\CC_t$ be a toric degeneration with $d$-dimensional toric special fiber $\defcolor{p.X_\calA}=\calX_0$.
 A general linear subspace $L$ of codimension $d$ gives linear sections $p.X_\calA\cap L$ and $\calX_1\cap L$.
 We combine the linear section homotopy of Proposition~\ref{prop:FlatHomotopy} with the polyhedral homotopy to obtain
 the toric two-step homotopy algorithm for computing the points of the linear section $\calX_1\cap L$.
 Let \defcolor{$G_\calA$} be System~\eqref{Eq:sparse}, which is given by the pull back of $L$
 along  $\varphi_{p,\calA}$.
 
\begin{algorithm}[Toric two-step homotopy algorithm]\
\label{Alg:TDA}

{\bf Input:}  A toric degeneration $\calX\subset\PP^n\times\CC_t$ with $\calX_0=p.X_\calA$ a toric
     variety and a general

  \mbox{{\color{white}{\bf Input:}}} linear space $L\subset\PP^n$ of codimension equal to the dimension of $\calX_1$.

{\bf Output:} All points of the linear section $\calX_1\cap L$.

{\bf Do:}
\begin{enumerate}[(\roman*)]
\item Compute the system $G_{\calA}$ by pulling $L$  back along the Kodaira map $\varphi_{p,\calA}$.\label{TDA0}

\item Use the polyhedral homotopy to solve $G_\calA$.\label{TDA1}

\item Use $\varphi_{p,\calA}$ 
  to obtain the points of the linear section $p.X_\calA\cap L$.\label{TDA2}

\item Use the linear section homotopy (Proposition \ref{prop:FlatHomotopy}) beginning with the points of $p.X_\calA\cap L$ to obtain the points of
  the linear section $\calX_1\cap L$.\label{TDA3}
\end{enumerate}
\end{algorithm}

The discussion preceding Algorithm~\ref{Alg:TDA} justifies the following theorem:

\begin{theorem}\label{T:TDHA}
  Algorithm~\ref{Alg:TDA} is an optimal homotopy algorithm for computing $\calX_1\cap L$.
\end{theorem}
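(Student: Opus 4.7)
The plan is to chain together the optimality of the two homotopies in Algorithm~\ref{Alg:TDA} and verify that the intermediate data produced in step~(iii) correctly feeds the second homotopy. Step~(i) is purely symbolic and tracks no paths, so optimality reduces to analyzing steps~(ii)--(iv).

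For step~(ii), the pulled-back system $G_\calA$ has the property that each equation has Newton polytope $\mathrm{conv}(\calA)$, so by the polyhedral homotopy result~\cite{HS95,VVC} cited just before Algorithm~\ref{Alg:TDA}, the polyhedral homotopy is an optimal homotopy for $G_\calA$, tracking exactly as many paths as $G_\calA$ has isolated solutions in $(\CC^\times)^d$. For step~(iii), the generality of $L$ guarantees via a Bertini-style transversality argument that $p.X_\calA \cap L$ is a transverse, zero-dimensional intersection lying entirely in the dense torus $\TT \subset \PP^n$. Since $\varphi_{p,\calA}$ is a morphism from $(\CC^\times)^d$ onto the open torus of $p.X_\calA$ and $G_\calA$ is precisely the pull back of the linear forms cutting out $L$, applying $\varphi_{p,\calA}$ to the solutions of $G_\calA$ recovers all of $p.X_\calA \cap L$.

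For step~(iv), I would invoke Proposition~\ref{prop:FlatHomotopy} directly: the family $\calX \to \CC_t$ is flat with $L$ meeting both $\calX_0$ and $\calX_1$ transversally, so the linear section homotopy from $\calX_0 \cap L = p.X_\calA \cap L$ to $\calX_1 \cap L$ is an optimal homotopy, and flatness forces $\#(\calX_0 \cap L) = \#(\calX_1 \cap L)$. Chaining these three steps produces the desired set $\calX_1 \cap L$ with no extraneous paths tracked in either homotopy.

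The main delicate point is the transversality verification in step~(iii): one must argue that the generic linear space $L$ of codimension $d$ is enough to place $p.X_\calA \cap L$ entirely in $\TT$, so that the output of step~(ii) surjects onto it under $\varphi_{p,\calA}$. This reduces to a standard argument using the torus-orbit stratification of $p.X_\calA$ together with dimension counting: every orbit of $p.X_\calA$ of positive codimension has dimension less than $d$ and so is avoided by a generic linear space of codimension $d$.
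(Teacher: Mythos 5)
Your proposal is correct and matches the paper's approach: the paper simply cites ``the discussion preceding Algorithm~\ref{Alg:TDA},'' which amounts to exactly the three ingredients you chain together (optimality of the polyhedral homotopy for the sparse start system~\eqref{Eq:sparse}, the toric Kodaira map $\varphi_{p,\calA}$ recovering $p.X_\calA\cap L$, and Proposition~\ref{prop:FlatHomotopy} for the linear section homotopy). Your Bertini-style observation that a general codimension-$d$ linear space $L$ misses all lower-dimensional torus orbits of $p.X_\calA$, so that $p.X_\calA\cap L$ sits in the dense torus $\TT$ and is fully recovered from the solutions of $G_\calA$, is a genuinely useful elaboration that the paper leaves implicit.
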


\begin{remark}\label{Ref:ToricComplex}
Algorithm~\ref{Alg:TDA} can be applied to compute $\calX_1\cap L$ 
  when the definition of a toric degeneration is relaxed so that
  $\calX_0$ is a union of toric varieties (see Remark~\ref{Ref:toriccomplex:example} for examples).
  The points in a general linear section $\calX_0\cap L$ in Algorithm~\ref{Alg:TDA} may be computed from
  systems of sparse polynomials for each toric component of $\calX_0$.\hfill$\diamond$ 
\end{remark}

\subsection{Examples of toric degenerations}

We present three examples of toric degenerations.
Example~\ref{ex:weight} is the motivating example for this paper.
Example~\ref{Ex:quasi-symmstry} illustrates an alternate source of toric degenerations.
Example~\ref{Ex:BS_TDA} is an explicit application of Algorithm~\ref{Alg:TDA}.

\begin{example}\label{ex:weight}
Weight degenerations induced by a $\CC^\times$-action on $\PP^n$ are a source of toric degenerations.
Anderson~\cite{Anderson} constructs a toric weight degeneration given a Khovanskii basis.
The SAGBI homotopy~\cite{HSS} is also based on a toric
weight degeneration.

We review the construction in~\cite[\sect\ 15.8]{Eisenbud} of flat families from $\CC^\times$-actions.
Let $w\in\ZZ^{n+1}$ be a weight and define an action of the torus $\CC^\times$ on $\PP^n$ by
 \[
   (x,t)\in\PP^n\times\CC^\times\mapsto\defcolor{t.x}\vcentcolon= [x_0t^{-w_0},\dotsc, x_nt^{-w_n}] \in \PP^n.
 \]
 The dual action on functions is $\defcolor{t.f(x)}\vcentcolon= f(t^{-1}.x)$, and it induces an action on polynomials.
 For a polynomial $f=\sum c_\alpha x^\alpha$,
 \begin{equation}\label{Eq:w-action}
   t.\left(\sum c_\alpha x^\alpha\right) = \sum c_\alpha x^\alpha t^{w\cdot\alpha},
 \end{equation}
where  $w\cdot\alpha$ is the usual dot product.
(To compare this to~\cite[\sect\ 15.8]{Eisenbud}, let $w=-\lambda$.) 
Let \defcolor{$w(f)$} be the minimum value of $w\cdot\alpha$ for $c_\alpha\neq 0$.
We define
 \begin{equation}\label{Eq:f_t}
     \defcolor{f_t}\vcentcolon=(t.f)t^{-w(f)}=f_w+ t\,g,
 \end{equation}
where the \demph{initial form} \defcolor{$f_w$} of $f$ is the sum of its terms $c_\alpha x^\alpha$ where
$w\cdot\alpha=w(f)$, and $g$ is a polynomial in the variables $t,x_0,\dotsc,x_n$. 

Let $X\subset\PP^n$ be a projective variety with ideal $I$.
Define $\calX^w\subset\PP^n\times\CC$ to be the Zariski closure of the family of translates of $X$, thus
 \[
  \defcolor{\calX^w} \vcentcolon=  \overline{\{(x,t)\in\PP^n\times\CC^\times :  x\in t.X\}}\subset\PP^n\times\CC_t.
 \]
For $t\not=0$, we observe that $\calX^w_t=t.X$ and has ideal $\langle f_t:f\in I\rangle$.
The following result establishes the flatness of this family:

\begin{proposition}[{\cite[\theo\ 15.17]{Eisenbud}}] 
  The family $\calX^w\to\CC_t$ is flat.
  The fiber at $t=0$ is the scheme with ideal
 \[
    I_w\ =\ \langle f_w :  f\in I\rangle.   
 \]
\end{proposition}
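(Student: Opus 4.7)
The plan is to identify $\calX^w$ with the subscheme of $\PP^n \times \CC_t$ cut out by $\tilde{I} := \langle f_t : f \in I\rangle \subset \CC[t, x_0, \dotsc, x_n]$, and to establish flatness by checking constancy of the fiberwise Hilbert function. Equip $\CC[t, x]$ with the grading $\deg x_i = 1$, $\deg t = 0$. Since $\CC[t]$ is a principal ideal domain, flatness of the finitely generated graded module $\CC[t,x]/\tilde{I}$ over $\CC[t]$ is equivalent to each graded piece being free (equivalently torsion-free), and this in turn reduces to verifying that the Hilbert function of the fiber $\CC[x]/(\tilde{I}|_{t=a})$ is independent of $a \in \CC$.

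For $a \in \CC^\times$, the polynomial $f_t|_{t=a}$ is a nonzero scalar multiple of $(t.f)|_{t=a}$ by the definition $f_t = (t.f)\, t^{-w(f)}$, and $t.f$ vanishes exactly on $a.X$. Thus the fiber ideal is $a.I$, and the graded torus automorphism $x_i \mapsto a^{-w_i} x_i$ of $\CC[x]$ carries $I$ isomorphically onto $a.I$, matching their Hilbert functions. For $a = 0$, the congruence $f_t \equiv f_w \pmod t$ from~\eqref{Eq:f_t} gives the fiber ideal $I_w$, as claimed in the proposition.

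The main obstacle is then to verify $\dim_\CC (I_w)_d = \dim_\CC I_d$ in every degree $d$. I would analyze the finite-dimensional space $V := I_d$ via its descending weight filtration $V^{\geq a} := \{f \in V : f = 0 \text{ or } w(f) \geq a\}$, which has only finitely many jumps. The linear map $\pi_a \colon V^{\geq a} \to \CC[x]_d$ sending $f$ to the sum of its weight-$a$ terms has kernel $V^{>a}$, and its image is exactly the weight-$a$ subspace of $(I_w)_d$. Here I would use that $\CC[x]$ is an integral domain to derive the identity $(gf)_w = g_w f_w$, which implies both that $(I_w)_d$ is the $\CC$-span of $\{h_w : h \in I_d\}$ and that it decomposes into weight-graded pieces matching the images of the $\pi_a$. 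Summing $\dim(V^{\geq a}/V^{>a})$ over $a$ then yields $\dim I_d = \dim (I_w)_d$. Combined with the previous paragraph, this delivers flatness of $\calX^w \to \CC_t$ and identifies the ideal of the special fiber as $I_w$, as required.
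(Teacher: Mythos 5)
Your argument is correct, and it takes a genuinely different route from the one the paper points to. The paper itself gives no proof, deferring to Eisenbud's Theorem~15.17; it only remarks that Eisenbud's proof uses a Gr\"obner basis $\calG$ for $I$ with respect to a weighted term order of weight $-w$ whose leading forms are the $f_w$, so that the standard monomials furnish a simultaneous $\CC[t]$-basis of $\CC[t,x]/\langle f_t : f\in I\rangle$ in each $x$-degree, giving freeness and hence flatness. Your approach bypasses term orders and Gr\"obner bases entirely: you verify Hilbert-function constancy directly, treating the nonzero fibers via a graded torus automorphism and the $t=0$ fiber via the descending $w$-weight filtration on each $I_d$, using the domain identity $(gf)_w = g_w f_w$ to match the associated graded pieces of the filtration with the weight-graded pieces of $(I_w)_d$. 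This is more elementary and handles the possibly non-monomial leading forms $f_w$ from the start; the Gr\"obner route, by contrast, produces an explicit free basis of standard monomials and the family $\calG_t$ that the paper exploits downstream in Example~\ref{ex:weight}. Two small points worth tightening: the automorphism $x_i\mapsto a^{-w_i}x_i$ carries $I$ to $a^{-1}.I$ rather than $a.I$ (which changes nothing, since either is a graded ring automorphism fixing $x$-degree); and you should say explicitly that constancy of the Hilbert function makes each graded piece of $\CC[t,x]/\langle f_t : f\in I\rangle$ torsion-free over the PID $\CC[t]$, so that $\langle f_t : f\in I\rangle$ is $t$-saturated and therefore equals the ideal of the Zariski closure $\calX^w$ --- this is what legitimizes the identification your opening sentence takes as a starting point, and it is logically prior to reading off the special fiber as $\calV(I_w)$.
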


The proof uses a Gr\"obner basis $\calG$ for $I$ with respect to a weighted term order $\leq$
with weight $-w$ so that $f_w$ consists of the $\leq$-leading terms for
$f$.\footnote{We use $-w$ because the leading form in the weighted term order $\leq_\omega$
     for $\omega\in\ZZ^{n+1}$ is the
    sum of terms with highest $\omega$-weight, which is opposite our convention from valuations.}

Suppose that the family $\calX^w$ has Gr\"obner basis $\defcolor{\calG_t}\vcentcolon=\{g_t :  g\in\calG\}$.
Then a Gr\"obner basis for $I_w$ is obtained by setting $t=0$ in $\calG_t$.
The scheme at $t=0$ may be neither reduced nor irreducible. 
If this scheme is a toric variety, then the weight degeneration $\calX^w\to\CC_t$ is a toric degeneration. \hfill$\diamond$  
\end{example}


\begin{remark}\label{Ref:toriccomplex:example}
 Homotopy algorithms using weight degenerations
  appearing in the literature include the homotopy for solving the Kuramoto equations~\cite{CD_Kuramoto}
  and the Gr\"obner homotopy~\cite{HSS}.
  In these examples, $I_w$ is a square-free monomial ideal so that the special fiber is a union of linear spaces.
  Such degenerations can be handled by Algorithm~\ref{Alg:TDA}, see  Remark~\ref{Ref:ToricComplex}.
  \hfill$\diamond$
\end{remark}

\begin{example}\label{Ex:quasi-symmstry}
Algebraic statistics gives examples of toric degenerations \cite{KMS} which do not come from a weight degeneration.
 Let $G$ be a graph with vertex set $\defcolor{[m]}\vcentcolon= \{1,\dotsc,m\}$ and edge set
 $\defcolor{E}\subset\binom{[m]}{2}$.
 For each $i\in[m]$, let $a_i$ be a parameter.
 For each $\{i,j\}\in E$, let $x_{ij}=x_{ji}$ and define $p_{ij}$ and $p_{ji}$ via the formula
 \[
    \defcolor{p_{rs}}\vcentcolon=  x_{rs}(1+a_r-ta_s).
 \]
 These polynomials give a map $p\colon\CC^{|E|}\times\CC^m\times\CC_t\to \PP^{2|E|-1}\times\CC_t$ whose image is the family
 \defcolor{$\QS$} of \demph{quasi-symmetry models}.
 This family contains two known quasi-symmetry models, the Pearsonian quasi-symmetry model at $t=1$  and the
 toric quasi-symmetry model at $t=0$. 
 Polynomials associated to cycles in $G$ generate the ideal of the family
 $\QS$.
 In the proof of this fact, one step is to show that this family is flat.

 The family of quasi-symmetry models when $G$ is a $3$-cycle is  the family of
 hypersurfaces defined by the cubic
 \begin{multline}\label{eq:P}
   \quad\defcolor{P}\vcentcolon= (1+t+t^2)(p_{12}p_{23}p_{31} - p_{21}p_{32}p_{13})\, + \\
     t(p_{12}p_{23}p_{13}+p_{12}p_{32}p_{31}+p_{21}p_{23}p_{31}-
     p_{12}p_{32}p_{13}-p_{21}p_{23}p_{13}-p_{21}p_{32}p_{31}).\quad
  \end{multline}
 The fiber  $\QS_0$ at $t=0$ is the toric variety defined by the binomial $p_{12}p_{23}p_{31} - p_{21}p_{32}p_{13}$.

 The family of quasi-symmetry models $\QS$ for a graph is typically not a weight degeneration.
 In particular, the family defined in Equation \eqref{eq:P} is not a weight degeneration.
 Indeed, in each of the eight terms of $P$, exactly one of $p_{ij}$ or $p_{ji}$ occurs, so the terms
 correspond to the vertices of a cube.
 For any weight $w$, $P_w$ consists of the sum of terms identified with some face of the cube.
 Since the polynomial defining  $\QS_0$ corresponds to a diagonal of the cube, it is not of the form $P_w$, for
 any $w$. \hfill$\diamond$
\end{example}

\begin{example}\label{Ex:BS_TDA}
We present an example of a weight degeneration and use it to illustrate Algorithm~\ref{Alg:TDA}.
Let $\defcolor{X}\subset\PP^7$ be the closure of the image of the map $\defcolor{\varphi}\colon\CC^3\to\PP^7$ given by
\[
    \varphi(x,y,z)\ =\ [1, x, y, z, xz, yz, x(xz+y), y(xz+y)].
\]
This subvariety has degree six and its ideal \defcolor{$I$} has nine generators:
 \begin{equation*}
    \begin{array}{c}
    x_1x_3-x_0x_4,\, \                             
    x_2x_3-x_0x_5,\,\                              
    x_1x_2-x_0x_6+x_1x_4,\,\                       
    x_2^2-x_0x_7+x_3x_6-x_4^2,\, \vspace{1pt}\\  
    x_2x_6-x_1x_7,\,\         
    x_2x_5-x_3x_7+x_4x_5,\,\   
    x_1x_5-x_3x_6+x_4^2,\,     
    x_2x_4-x_1x_5,\,\         
    x_5x_6-x_4x_7.          
   \end{array}
 \end{equation*}
 Let $w=(-2,-1,-1,-1,0,0,0,0)$. 
We use Equation~\ref{Eq:f_t} to compute the ideal of $\calX^w$.
 The following thirteen polynomials form a Gr\"obner basis \defcolor{$\calG_t$} for $\calX^w$ with respect to
 the weighted term order $\leq_{-w}$:
  \begin{equation*}
    \begin{array}{c}
    \underline{x_1x_3-x_0x_4}\,,\:\                           
    \underline{x_2x_3-x_0x_5}\,,\:\                           
    \underline{x_1x_2-x_0x_6}+tx_1x_4\,,\:\                    
    \underline{x_2^2-x_0x_7}+tx_3x_6-t^2x_4^2\,,\vspace{3pt}\\ 
    \underline{x_2x_6-x_1x_7}\,,\:\                           
    \underline{x_2x_5-x_3x_7}+tx_4x_5\,,\:\                   
    \underline{x_1x_5-x_3x_6}+tx_4^2\,,\:\                    
    \underline{x_2x_4-x_1x_5}\,,\:\                          
    \underline{x_5x_6-x_4x_7}\,,       \vspace{3pt}          
\\          
    \underline{x_0x_6^2-x_1^2x_7}-tx_1x_4x_6\,,\:\            
    \underline{x_0x_5^2-x_3^2x_7}+tx_3x_4x_5\,,\vspace{3pt}\\ 
    \underline{x_0x_4x_5-x_3^2x_6}+tx_3x_4^2\,,\:\            
    \underline{x_3x_6^2-x_1x_4x_7}-tx_4^2x_6.                 
  \end{array}
\end{equation*}
The leading terms with respect to $\leq_{-w}$ are underlined, and these binomials generate the
ideal \defcolor{$I_w$}.  This ideal is the toric ideal of the image of the map
$\varphi_\calA(x,y,z)=[1, x, y, z, xz, yz, xy, y^2]$ given by the lowest order monomials in $\varphi$.  
For the toric ideal statement, observe that if we set $(x_0,x_1,x_2,x_3)=(1,x,y,z)$,
then the first four underlined binomials in $\calG_t$
express $x_4,\dotsc,x_7$ as the monomials in $x,y,z$ appearing in $\varphi_\calA$.
The exponent vectors of $\varphi_\calA$ are the columns of the matrix $\calA$ in Figure~\ref{F:ANP}.
\begin{figure}[htb]
  \centering
   $ \calA\ =\ \left(\begin{matrix}
        0&1&0&0&1&0&1&0\\0&0&1&0&0&1&1&2\\0&0&0&1&1&1&0&0
      \end{matrix}\right)$
      \qquad\quad
      \raisebox{-37.5pt}{\includegraphics{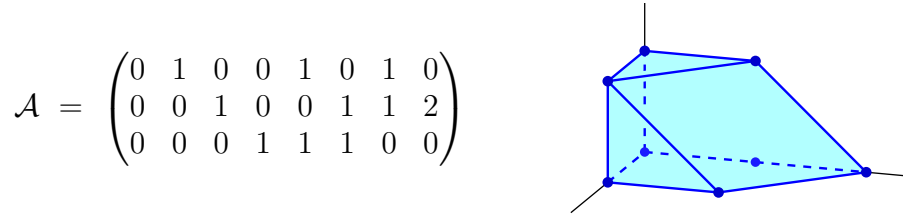}}
    
      \caption{The weight vectors for the toric Kodaira map $\varphi_\calA$ are the columns of matrix $\calA$.
        The Newton polytope is the convex hull of these vectors.}
    \label{F:ANP}
\end{figure}

Let $\defcolor{L}\subset\PP^7$ be the linear subspace of codimension three whose defining equations are
$\ell_i = \sum c_{ij} x_j$ for $i=1,2,3$, where $C=(c_{ij})$ is the
$3\times 8$ matrix 
 \begin{equation*}
    C\ =\ \left(\begin{matrix}
        1&1&1&1&1&1&1&1\\1&-2&3&-4&5&-6&7&-8\\2&3&5&7&11&13&17&19
      \end{matrix}\right)\,.
 \end{equation*}
 The subspace $L$ meets both $\overline{\varphi(\CC^3)}=\calX_1$ and
 $\overline{\varphi_\calA((\CC^\times)^3)}=\calX_0$ transversally in six points.

We follow the steps of Algorithm~\ref{Alg:TDA} to compute $\calX_1\cap L$.
We first compute the sparse system $G_\calA$ in Step \ref{TDA0} of Algorithm~\ref{Alg:TDA} to arrive at the system
\begin{align*}
  1+x+y+z+xz+yz+xy+y^2&=0\nonumber\\
  1-2x+3y-4z+5xz-6yz+7xy-8y^2&=0\\
  2+3x+5y+7z+11xz+13yz+17xy+19y^2&=0.\nonumber
\end{align*}
 In Step \ref{TDA1} of Algorithm~\ref{Alg:TDA}, we compute the six solutions of the system $G_\calA$, 
 one of which is  $\zeta=(-1.33613,1.51406,-1.22871)$. 
 The image $\varphi_\calA(\zeta)$ in $\PP^7$ is
 \[
   [1 ,\, -1.33613 ,\, 1.51406 ,\, -1.22871 ,\, 1.64171 ,\, -1.86035 ,\, -2.02298 ,\, 2.29239].
 \]
 In Step~\ref{TDA2} of Algorithm~\ref{Alg:TDA}, we compute the images of
 these six solutions under $\varphi_\calA$, which forms the points of $\calX_0\cap L$.
 Therefore, the images of these points are the solutions to the start system for the linear section
 homotopy given by $H(x,t)=(\calG_t,L)$.
 In Step~\ref{TDA3}, these solutions are followed from $t=0$ to $t=1$, computing the
 six points of the linear section $\calX_1\cap L$.
 One point of $\calX_1\cap L$ is 
 \[
   [1,\, -0.689522,\, 0.928435,\, -1.35986,\, 0.937652,\, -1.26254,\, -1.28671,\, 1.73254].
   \eqno{\diamond}
 \]
\end{example}

\section{Khovanskii bases and the Khovanskii homotopy}\label{S:Khovanskii}

Let $X$ be a complex variety and $V\subset\CC(X)$ be a finite-dimensional complex vector space of rational functions
on $X$. 
The closure of the image of $X$ under the Kodaira map $\varphi_V\colon X\dashrightarrow \PP(V^*)$ has homogeneous coordinate
ring $R(V)$ generated by $V$.  
When this ring has a finite Khovanskii basis contained in $V$, Anderson's toric degeneration embeds in $\PP(V^*)$ as a
weight degeneration.
We use this degeneration in the Khovanskii homotopy algorithm (Algorithm \ref{Alg:Khovanskii}) to compute a linear section $\varphi_V(X)\cap L$.

We review the theory of Newton-Okounkov bodies and  Khovanskii bases and then describe how to produce an embedding of
Anderson's toric 
degeneration into $\PP(V^*)$ when the Khovanskii basis is a subset of $V$.
We also show how to compute a Kodaira map of the toric special fiber.
With the embedding and toric Kodaira map, Algorithm~\ref{Alg:TDA} becomes an effective method to compute linear sections.
In Section~\ref{S:WPS}, we explain how to modify this method for the general case when the Khovanskii basis is not a
subset of $V$.

\subsection{Valuations, Khovanskii bases, and Newton-Okounkov bodies}

We recall the key definitions and properties of Khovanskii bases from \cite{KM_tropical}.
Suppose that $X$ is a $d$-dimensional complex variety with function field $\CC(X)$.
Let $\succ$ be a total order on $\ZZ^d$ so that $\ZZ^d$ is an ordered abelian group.
A \demph{$\ZZ^d$-valuation} on $\CC(X)$ is a surjective group homomorphism $\nu\colon\CC(X)^\times\mapsto\ZZ^d$ satisfying
the property that for all $f,g \in\CC(X)$ and $c\in \CC^\times$, 
 \begin{center}
    $\nu(f+g) \succeq \min\{\nu(f), \nu(g)\}$ \ and \  $\nu(c) = 0$.
 \end{center}
By convention, $\nu(0)=\infty$, $\infty\succeq \alpha$, and $\alpha+\infty=\infty$ for all $\alpha\in\ZZ^d$.
 Since $\dim X=d$, $\nu$ is a surjection, and $\CC$ is algebraically closed,
 it follows that
if $f,g\in\CC(X)^\times$ with $\nu(f)=\nu(g)$, then there is a unique $c\in\CC^\times$ with $\nu(f{-}cg)\succ\nu(f)$.

Let $V$ be a finite-dimensional complex vector subspace of $\CC(X)$.
We assume that the image of $V^\times$ under $\nu$ generates $\ZZ^d$ (see Remark~\ref{R:nuGenerates}).
We write $\defcolor{R(V)}$ for the graded ring
$\bigoplus_{k\ge 0} V^ks^k$, where
$\defcolor{V^k}\subset\CC(X)$ is the subspace spanned by all $k$-fold products of elements in $V$ and \defcolor{$s$} is 
a formal variable recording the grading.
A nonzero element $f\in R(V)^\times$ is the sum of its homogeneous components,
 \begin{equation*}
   f = f_k s^k + \dotsb + f_1 s + f_0,
 \end{equation*}
where $f_k\neq 0$ and $f_i\in V^i$ for all $i$.
We extend the valuation $\nu$ to $R(V)$ by defining $\defcolor{\nu(f)}\vcentcolon= (\nu(f_k), k)\in\ZZ^d\oplus \NN$.
We also extend $\succeq$ to $\ZZ^d\oplus\NN$, where $(\alpha,k)\succ(\beta,l)$ if $k<l$ or else $k=l$ and $\alpha\succ\beta$ in the order on
$\ZZ^d$. 
The direction of the inequality in $k<l$ is chosen to be consistent with $\nu(f)= (\nu(f_k), k)$ defining a valuation.

We write $\defcolor{S(V,\nu)}$ for the image $\{\nu(f) :  f \in R(V)^\times\}$ of  $R(V)^\times$ under $\nu$.
This is a submonoid of $\ZZ^d\oplus\NN$.
The closure of the convex hull of $S(V,\nu)$ in $\RR^d \times \RR$ is the cone $\defcolor{\text{cone}(V)}$.
Its base $\defcolor{\NO_V}\vcentcolon=  \text{cone}(V) \cap (\RR^d \times \{1\})$ is the \demph{Newton-Okounkov body} of
$V$. 
The Newton-Okounkov body carries a considerable amount of information about $R(V)$, see~\cite{KaKhb,LaMu}.
For example, the number of solutions to System~\eqref{Eq:generalSystem} where $f_1,\dotsc,f_d\in V$ are general 
(in this case, we say that System~\eqref{Eq:generalSystem} is \demph{drawn} from $V$) is
  the normalized volume of $\NO_V$.
  
A \demph{Khovanskii basis}~\cite{KM_tropical} for $V$ is a linearly independent set $\calB\subset R(V)$
whose image under $\nu$ generates $S(V,\nu)$.
We assume that the elements of $\calB$ are homogeneous so that for $b\in\calB$ with $\nu(b)=(\alpha,k)$, $b\in V^ks^k$.
Necessarily, $\calB$ generates $R(V)$ and $\calB\cap Vs$ is a basis for $Vs$.
We observe that $S(V,\nu)$ is finitely generated if and only if $V$ has a finite Khovanskii basis.
When $V$ has a finite Khovanskii basis, Anderson~\cite{Anderson} shows that
$\NO_V$ is a rational polytope and that there exists a flat degeneration $\calX \to \CC_t$ of
$\calX_1\simeq\Proj(R(V))$ to the toric variety
$\calX_0\simeq\Proj(\CC[S(V,\nu)])$.

The valuation $\nu$ on $R(V)$ induces a filtration on $R(V)$ by finite-dimensional subspaces indexed by elements
$(\alpha,k)\in S(V,\nu)$. 
We let
 \begin{equation*}
 \begin{array}{rcl}
   \defcolor{R(V)_{(\alpha,k)}}&\vcentcolon= &\{f\in R(V) :  \nu(f)\succeq(\alpha,k)\},
        \mbox{  and}\vspace{4pt}\\ 
  \defcolor{R(V)^+_{(\alpha,k)}}&\vcentcolon= &\{f\in R(V) :  \nu(f)\succ(\alpha,k)\}.
 \end{array}
 \end{equation*}
Since $(\alpha,k)\in S(V,\nu)$, these subspaces satisfy $R(V)_{(\alpha,k)}/R(V)^+_{(\alpha,k)}\simeq\CC$.
Anderson's flat degeneration comes from the degeneration of the filtered algebra $R(V)$ to its associated graded algebra 
\[
  \defcolor{\gr R(V)} \vcentcolon=  \bigoplus_{(\alpha,k)\in S(V,\nu)}R(V)_{(\alpha,k)}/R(V)^+_{(\alpha,k)}\ \simeq\
  \CC[S(V,\nu)].
\]
The toric fiber $\calX_0$ of Anderson's degeneration is
$\Proj(\gr R(V))$, and the isomorphism $\calX_0\simeq\Proj(\gr R(V))$ uses the isomorphism
$\gr R(V)\simeq \CC[S(V,\nu)]$.

Kaveh and Manon give a method to compute a finite Khovanskii basis for $V$ with respect
to a valuation $\nu$~\cite[\algo\ 2.18]{KM_tropical}.
We take a finite Khovanskii basis as an input to our algorithms.

\subsection{The Kodaira map and embedding the degeneration}\label{sec:embedding}

To use Anderson's toric degeneration $\calX$
in Algorithm~\ref{Alg:TDA}, $\calX$ must be embedded in a projective space.
Suppose that we are given a finite Khovanskii basis $\calB$ for $V$ such that $\calB\subset Vs$.
Therefore, $\calB$ is a basis for $Vs$, by definition.

Let $X^\circ\subset X$ be the open subset of points of $X$ where no function from $V$ has a pole, and some function in $V$
is nonzero.
Evaluation of functions from $V$ at a point $z\in X^\circ$ gives a nonzero linear map
$\defcolor{\ev_z(f)}\vcentcolon= f(z)$ on $V$.
Therefore, $\ev_z$ is a point in the projective space $\PP(V^*)$, where \defcolor{$V^*$} is the space of linear functions
$V\to\CC$. 
Thus the map $z\mapsto\ev_z$ induces a map $X^\circ\to\PP(V^*)$, which is called the \demph{rational Kodaira map}
$\varphi_V \vcentcolon X\dasharrow\PP(V^*)$. 
If we write $\calB=\{b_0s,\dots,b_ns\}$, then
a Kodaira map can be explicitly written as
$\defcolor{\varphi_\calB}\colon z\in X^\circ\mapsto[b_0(z),\dotsc,b_n(z)]\in\PP^n\simeq\PP(V^*)$.

\begin{remark}\label{R:nuGenerates}
  Our algorithms compute the points of $\varphi_\calB(X^\circ)\cap L$.
  Given these points, the solutions to System~\eqref{Eq:generalSystem} on $X^\circ$ are their pull backs along $\varphi_\calB$.
  When the Kodaira map is not injective, we follow Am\'endola and Rodriguez~\cite{AmendolaRodriguez} and note that these 
  pull backs may be computed from the linear section and the points in a single general fiber of the
  Kodaira map. 

  Consequently, we assume that the Kodaira map is an injection and replace $X$ by its birational
  copy $\Proj(R(V))$, which is the closure of $\varphi_\calB(X^\circ)$ in $\PP^n$.
  In this case, $X=X^\circ$, $V$ generates the function field $\CC(X)$ of $X$, and the image of $V^\times$ under 
  $\nu$ generates $\ZZ^d$.
  Thus the assumption
  that $X=\Proj(R(V))$ implies that the image of $V^\times$ under $\nu$ generates $\ZZ^d$.\hfill$\diamond$ 
\end{remark}

We recall the embedding of Anderson's toric degeneration $\calX$ into
$\PP(V^*)$~\cite[\sect\ 2.2]{KM_tropical}.
We let $\defcolor{\calA}\vcentcolon=\nu(\calB)$ be the $({d+1})\times (n+1)$ 
matrix whose ${i}^{\rm th}$ column is $\nu(b_{i-1}s)$
for the Khovanskii basis $\calB=\{b_0s,\dots,b_ns\}\subset Vs$.
We note that the last row of $\calA$ is $(1,\dotsc,1)$.
We define a partial order \defcolor{$>_{\calA}$} on $\ZZ^{n+1}$ where $\beta>_{\calA}\alpha$
if $\calA\alpha  \succ \calA\beta$ in $\ZZ^{d+1}$.
The initial form $\defcolor{\ini_{\calA}(f)}$ of a polynomial $f$ with respect to $>_{\calA}$ is the sum of all
terms $c_\alpha x^\alpha$ which minimize $\calA\alpha$.

The ideal \defcolor{$I_\calB$} of $X=\varphi_\calB(X)$ is the kernel of the map $\CC[x_0,\dots,x_n]\to R(V)$ which takes
$x_i$ to $b_is$. 
We define \defcolor{$\ini_{\calA}(I_\calB)$} to be the ideal generated by $\ini_{\calA}(f)$ for $f\in I_\calB$.
Anderson~\cite[\lemm\ 8]{Anderson} shows that there exists $w\in\ZZ^{d+1}$ such that if $\leq_{-w\calA}$ is the
weighted term order on $\CC[x_0,\dots,x_n]$ induced by $-w\calA$, then the leading term ideal
\defcolor{$\lt_{-w\calA}(I_\calB)$}  of $I_\calB$ equals $\ini_{\calA}(I_\calB)$.
Let $w$ be such a weight vector and \defcolor{$\calG$} denote a Gr\"obner basis for $I_\calB$  with respect to 
a total order induced from the term
order $\leq_{-w\calA}$. 
The leading forms of elements of $\calG$ with respect to $\leq_{-w\calA}$ generate
$\ini_{\calA}(I_\calB)$.

Let $g  = \sum _\alpha c_\alpha x^\alpha$ be a polynomial in $\calG$, and define
$\defcolor{w(g)}\vcentcolon=  \min \{ w \calA \alpha : c_\alpha \ne 0\}$.
Using Formula~\eqref{Eq:f_t} (with $w\calA$ in place of $w$), we construct
 \begin{equation}\label{eq:gt}
  g_t = \sum_\alpha c_\alpha x^\alpha t^{w \calA \alpha-w(g)}.
 \end{equation}
Let $\defcolor{\calG_t}\vcentcolon= \{g_t :  g\in\calG\}$.
At $t=0$, $\calG_0$ generates $\ini_{\calA}(I_\calB)$ and at $t=1$, $\calG_1=\calG$ generates $I_\calB$.  

Finally, we define \defcolor{$I_\calA$} to be the kernel of the map $\CC[x_0, \dots, x_n]\to\gr R(V)$  which takes $x_i$ to
$\overline{b_is}\in R(V)_{(\nu(b_i),1)}/R(V)^+_{(\nu(b_i),1)}$. 
We note that $I_\calA$ is a toric ideal, and by~\cite[\theo\ 2.17]{KM_tropical}, $I_\calA=\ini_{\calA}(I_\calB)$.
Thus the toric weight degeneration can be embedded into $\PP^n\simeq\PP(V^*)$.

\begin{proposition}[{\cite[Theorem 1]{Anderson}}]
  Let $X$ be a variety and $V\subset \CC(X)$ a finite-dimensional space of functions which has a finite Khovanskii basis
  $\calB\subset Vs$. 
  Then the family $\calX \to \CC_t$ defined by $\calG_t$
  is flat and embeds into $\PP^n$ as the weight degeneration of $\calX_1  = \Proj(R(V))= \varphi_\calB(X)$ induced by $w\calA$.
  In particular, $\calX_0\simeq\Proj(\CC[S(V,\nu)])$ and $\calX$ is a toric degeneration.
\end{proposition}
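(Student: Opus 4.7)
The plan is to apply the weight degeneration framework from Example~\ref{ex:weight} (i.e., the Eisenbud result quoted there) to the ideal $I_\calB$ with weight vector $\defcolor{\omega}\vcentcolon= w\calA\in\ZZ^{n+1}$, and then to identify the resulting special fiber with the toric variety $\Proj(\CC[S(V,\nu)])$ using the ingredients already assembled in Section~\ref{sec:embedding}. The family $\calX\to\CC_t$ defined by $\calG_t$ is by construction the Zariski closure of $\{(x,t)\in\PP^n\times\CC^\times \,:\,x\in t.\calX_1\}$ for the $\omega$-action, because the formula in Equation~\eqref{eq:gt} is precisely the translate construction in Equation~\eqref{Eq:f_t} applied to a Gr\"obner basis. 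So flatness, together with the identification $\calX_1=\varphi_\calB(X)=\Proj(R(V))$, will follow once we know that $\calG$ is actually a Gr\"obner basis for $I_\calB$ under the weighted term order $\leq_{-w\calA}$ -- but this is guaranteed by our choice of $\calG$ in Section~\ref{sec:embedding}.

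Next, to identify $\calX_0$, I would invoke the Eisenbud result to conclude that the ideal of $\calX_0$ is the initial ideal of $I_\calB$ with respect to $\leq_{-w\calA}$, i.e.\ $\lt_{-w\calA}(I_\calB)$. Anderson's Lemma~8, as recalled in Section~\ref{sec:embedding}, says that with our chosen $w$ one has $\lt_{-w\calA}(I_\calB)=\ini_{\calA}(I_\calB)$, and Theorem~2.17 of Kaveh-Manon~\cite{KM_tropical}, also recalled there, states that $\ini_{\calA}(I_\calB)=I_\calA$. Composing these three identifications yields that the ideal of $\calX_0$ in $\PP^n$ is the toric ideal $I_\calA$, whose homogeneous coordinate ring is $\CC[x_0,\dots,x_n]/I_\calA\simeq\gr R(V)\simeq\CC[S(V,\nu)]$ by the construction in Section~\ref{sec:embedding}. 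This gives $\calX_0\simeq\Proj(\CC[S(V,\nu)])$ and shows that $\calX$ is a toric degeneration.

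The main obstacle, and the step I would take the most care with, is the assertion that the weight-degeneration recipe in Equation~\eqref{Eq:f_t} applied to an arbitrary generating set of $I_\calB$ yields a flat family whose central fiber has ideal generated by the $\omega$-initial forms of the generators. This requires that $\calG$ be a Gr\"obner basis with respect to the weighted order $\leq_{-\omega}$, not merely a generating set, since otherwise the initial forms of generators need not generate the initial ideal. This is precisely why Section~\ref{sec:embedding} fixes $\calG$ to be such a Gr\"obner basis and why Anderson's Lemma~8 is needed to compare $\lt_{-w\calA}$ with $\ini_{\calA}$. A secondary bookkeeping point is that $\omega=w\calA$ is an integer vector in $\ZZ^{n+1}$, so Equation~\eqref{eq:gt} produces polynomials in $\CC[x_0,\dots,x_n,t]$ (rather than in $\CC[x_0,\dots,x_n,t^{1/N}]$), which is needed for $\calX\subset\PP^n\times\CC_t$ to be a genuine subvariety.

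Once these ingredients are lined up, the embedding $\calX\hookrightarrow\PP^n\times\CC_t$ is realized by the polynomials $\calG_t$, the generic fiber is identified with $\varphi_\calB(X)=\Proj(R(V))$ via the Kodaira map $\varphi_\calB$ of Section~\ref{sec:embedding}, and the special fiber is the toric variety $\Proj(\CC[S(V,\nu)])$; the family is flat by Eisenbud's theorem. This establishes all three claims of the proposition.
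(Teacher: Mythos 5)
Your proposal is correct and follows the same route the paper takes: the paper does not give a self-contained proof of this proposition (it is cited to Anderson's Theorem~1), but the surrounding discussion in Section~\ref{sec:embedding} assembles exactly the ingredients you use -- Eisenbud's weight-degeneration theorem applied to $I_\calB$ with weight $w\calA$, the choice of $\calG$ as a Gr\"obner basis for a term order refining $\leq_{-w\calA}$ so that the initial forms of $\calG$ generate the initial ideal, Anderson's Lemma~8 to identify $\lt_{-w\calA}(I_\calB)=\ini_{\calA}(I_\calB)$, Kaveh--Manon's Theorem~2.17 to identify $\ini_{\calA}(I_\calB)=I_\calA$, and the isomorphism $\CC[x_0,\dots,x_n]/I_\calA\simeq\gr R(V)\simeq\CC[S(V,\nu)]$. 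Your flagging of the Gr\"obner-basis requirement and the integrality of $w\calA$ are the correct points of care.
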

%
We now discuss the relationship between $I_\calA$ and $I_\calB$.
For $u\in\NN^{n+1}$, we write \defcolor{$\calB^u$} for the product $\prod (b_i s)^{u_i}$ of elements in
the Khovanskii basis.
Since $\nu(\calB^u)=\calA u$, when $\calA u = \calA v$ for some $u,v\in\NN^{n+1}$,
$\nu(\calB^u)=\nu(\calB^v)$ and there is a unique $\defcolor{c}\in\CC^\times$ such that 
\[
   \calA u = \calA v \prec \nu(\calB^u-c\calB^v)\quad\text{and}\quad \calB^u-c\calB^v\in R(V)^+_{\calA u}.
\]
Since the last row of $\calA$ is $(1,\dotsc,1)$, both $\calB^u$ and $c\calB^v \in V^k s^k$ for some $k$ and 
their difference is homogeneous.

The subduction algorithm~\cite[\algo\ 2.11]{KM_tropical} rewrites this difference as a homogeneous polynomial of degree $k$ 
in the elements of the Khovanskii basis,
\[
  \calB^u-c\calB^v\ =\ h(b_0s, b_1s,\dotsc,b_ns).
\]
In particular, $g\vcentcolon= x^u-cx^v-h(x_0,\dotsc,x_n)\in I_\calB$ with initial form $x^u-cx^v\in I_\calA$.
Applying Formula~\eqref{eq:gt}, we have that
\[
  g_t =  x^u-cx^v - t^r h_t,
\]
where $r=w(h)-w(g)>0$.

\begin{remark}\label{R:calBKodaira}
 We recall that the torus $\TT=(\CC^\times)^{n+1}/\Delta\CC^\times\simeq(\CC^\times)^n$ is the set of points in
 $\PP^n$ with nonzero coordinates. 
 A Kodaira map for the toric fiber $\calX_0$ has the form $\varphi_{p,\calA}$, as in 
 Formula~\eqref{Eq:translatedToricKodaira}, for any $p\in\TT\cap\calX_0$.
 We provide a construction of such a point. 
  
 Let $x^u-cx^v\in I_\calA$.
 Then $\calA u=\calA v$, so that $u-v\in \ker(\calA)$.
 Restricting this binomial to $\calX_0\cap\TT$ results in the equation $c=x^{u-v}$.
 The constant $c$ depends upon $u-v\in\ker(\calA)$, and we write \defcolor{$c_{u-v}$} for $c$.
 Thus a point $p\in\calX_0\cap\TT$ satisfies equations of the form
 \begin{equation*}
     c_u = p^u
 \end{equation*}
 for $u\in\ker(\calA)$.
 While every $u\in\ker(\calA)$ gives such an equation,
 an independent set of equations is given by 
 a basis $u_1,\dotsc,u_{n-d}$ for $\ker(\calA)$.
 The corresponding equations,
$c_{u_i}=p^{u_i}$ for $i=1,\dotsc,n{-}d$, define $\calX_0\cap\TT$ as a subvariety of $\TT$.

 To obtain a point of $\calX_0\cap\TT$, 
 we construct $d$ additional equations to these $n{-}d$ equations, 
 as follows: 
 Since $\defcolor{\bbI}\vcentcolon=(1,\dotsc,1)$ is a row of $\calA$, 
 $\ker(\calA)\subset\ker(\bbI)$, which is a rank $n$ sublattice of $\ZZ^{n+1}$.
 Let $v_1,\dotsc,v_d\in\ker(\bbI)$ be vectors such that
$u_1,\dotsc,u_{n-d},v_1,\dotsc,v_d$ are
 independent.
 Choose nonzero constants $c_{v_1},\dotsc,c_{v_d}\in\CC^\times$ and consider the
 system of binomials 
 \[
   c_{u_i}-p^{u_i}=0=c_{v_j}-p^{v_j}\quad\text{for}\quad i=1,\dotsc,n{-}d \ \mbox{ and }\ j=1,\dotsc,d.
 \]
 This system defines a finite set of points $p\in\calX_0\cap\TT$.
 An algorithm for solving such a system of binomials is given in~\cite[\lemm\ 3.2]{HS95}, which 
 involves computing the Smith normal form of the matrix whose columns are
 $u_1,\dotsc,u_{n-d},v_1,\dotsc,v_d$.
 We observe that only one solution is needed to obtain a Kodaira map.  \hfill$\diamond$
\end{remark}

\subsection{Khovanskii homotopy}
The procedure described in Section~\ref{sec:embedding}, combined with the toric two-step homotopy algorithm,
Algorithm~\ref{Alg:TDA}, forms the Khovanskii homotopy algorithm for computing the points of a linear section
$\varphi_V(X)\cap L$.

\begin{algorithm}[Khovanskii homotopy algorithm]\
  \label{Alg:Khovanskii}

  {\bf Input:}   A finite-dimensional subspace $V\subset\CC(X)$ for a variety $X=\Proj(R(V))$ of

  \mbox{{\color{white}{\bf Input:}}}
  dimension $d$, a finite Khovanskii basis $\calB\subset Vs$ for $V$, and a general linear 

  \mbox{{\color{white}{\bf Input:}}}
  subspace $L\subset\PP^n$ of codimension $d$.
  
{\bf Output:} All points in the linear section
   $\varphi_V(X)\cap L\subset\PP(V^*)$.

  {\bf Do:}
\begin{enumerate}[(\roman*)]
\item Compute $I_\calB=\ker(\CC[x_0,\dots,x_n]\rightarrow R(V))$ where $x_i\mapsto b_is$. \label{Alg:Khovanskii:Step2}
\item Compute a weight vector $w$ using \cite[\lemm\ 2]{Anderson} so that
  $\lt_{-w\calA}(I_\calB)=\ini_{\calA}(I_\calB)$,
  where $\calA$ is the matrix of valuations of $\calB$. \label{Alg:Khovanskii:Step3}

\item Compute a Gr\"obner basis $\calG$ for $I_\calB$ using the weight $-w\calA$. \label{Alg:Khovanskii:Step4}

\item Construct the homotopy $\calG_t$ using Formula~\eqref{eq:gt}.\label{Alg:Khovanskii:Step5}
  
\item Construct the Kodaira map $\varphi_{p,\calA}$ for
  $\calX_0$ by following Remark~\ref{R:calBKodaira}.\label{Alg:Khovanskii:Step6} 

\item Return the output $\varphi_V(X)\cap L$ of  Algorithm~\ref{Alg:TDA} with input $\calG_t$ and
  $L$.
\end{enumerate}

\end{algorithm}

\begin{theorem}
  Algorithm \ref{Alg:Khovanskii} is an optimal homotopy 
  algorithm for computing all points of $\varphi_V(X)\cap L$.
\end{theorem}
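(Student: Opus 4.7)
The plan is to reduce the theorem to Theorem~\ref{T:TDHA} by verifying that steps (i)--(v) of Algorithm~\ref{Alg:Khovanskii} produce a toric degeneration and a toric Kodaira map of the form required by the input to Algorithm~\ref{Alg:TDA}.

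First I would observe that steps (i)--(iv) construct exactly the family $\calX \to \CC_t$ appearing in Anderson's proposition (stated just before Section~\ref{S:Khovanskii}). By that proposition the family $\calG_t$ is flat, its generic fiber is $\calX_1 = \Proj(R(V)) = \varphi_V(X) \subset \PP^n \simeq \PP(V^*)$, and its special fiber is the toric variety $\calX_0 \simeq \Proj(\CC[S(V,\nu)])$. The roles of the weight vector $w$ in step (ii) and the Gr\"obner basis $\calG$ in step (iii) are to guarantee that $\ini_{\calA}(I_\calB) = \lt_{-w\calA}(I_\calB)$, which is exactly what makes the $t=0$ specialization $\calG_0$ a generating set for the toric ideal $I_\calA$. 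Thus $\calG_t$ is a toric degeneration embedded in $\PP^n$ in the sense of Section~\ref{S:TDHC}.

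Next I would verify that step (v) produces a point $p \in \calX_0 \cap \TT$, so that $\calX_0 = p.X_\calA$ and the associated translated toric Kodaira map is $\varphi_{p,\calA}$ as in~\eqref{Eq:translatedToricKodaira}. The equations $c_{u_i} = p^{u_i}$ for a basis $u_1,\dotsc,u_{n-d}$ of $\ker(\calA)$ cut out $\calX_0 \cap \TT$ inside $\TT$, since the binomials $x^u - c_u x^v$ (for $u-v \in \ker(\calA)$) span $I_\calA$; the supplementary equations $c_{v_j} = p^{v_j}$ complete the system to one with finitely many solutions and are solvable by the Smith normal form algorithm of~\cite[\lemm\ 3.2]{HS95}. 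Any solution $p$ yields both the translation identification $\calX_0 = p.X_\calA$ and the Kodaira map $\varphi_{p,\calA}$ supplied to Algorithm~\ref{Alg:TDA}.

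Finally, step (vi) invokes Algorithm~\ref{Alg:TDA} on the toric degeneration $\calG_t$ and the linear subspace $L$, whose output by Theorem~\ref{T:TDHA} is the linear section $\calX_1 \cap L = \varphi_V(X) \cap L$ via an optimal homotopy. This yields the desired conclusion. The main obstacle I would expect is the bookkeeping in step (v): confirming that the binomials chosen from $\ker(\calA)$ indeed define $\calX_0 \cap \TT$ set-theoretically (so that any solution $p$ genuinely lies on $\calX_0$) and that the homogeneity provided by $\bbI$ being a row of $\calA$ makes the choice of a representative $p \in \TT$ (rather than a projective point) consistent with the translation action on $\PP^n$. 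Once this is in hand, the remainder of the proof is a direct appeal to Anderson's proposition and Theorem~\ref{T:TDHA}.
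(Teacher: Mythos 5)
Your proof is correct and takes essentially the same route as the paper: the paper's own proof is a one-line deferral to the discussion in Section~\ref{sec:embedding}, and your argument simply spells out that discussion explicitly (Anderson's proposition for flatness and the identification of $\calX_0$ and $\calX_1$, Remark~\ref{R:calBKodaira} for the point $p$ and Kodaira map, then Theorem~\ref{T:TDHA} for optimality).
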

The correctness of Algorithm~\ref{Alg:Khovanskii} follows from the discussion in Section~\ref{sec:embedding}.
\begin{remark}\label{R:precompute}
 In many cases, Algorithm~\ref{Alg:Khovanskii} is applied to systems of functions where a finite Khovanskii basis 
 is explicitly known from the theory (see Example \ref{Ex:BSextrinsic}).  
 In this case, we not only have the data for the finite Khovanskii basis $\calB$, but also
  some or all of the data for Steps \ref{Alg:Khovanskii:Step2}, \ref{Alg:Khovanskii:Step3}, and \ref{Alg:Khovanskii:Step4} of
  Algorithm~\ref{Alg:Khovanskii}.
\hfill$\diamond$
\end{remark}

\begin{example}\label{Ex:BSextrinsic}
  We illustrate Algorithm~\ref{Alg:Khovanskii} and Remark~\ref{R:precompute} on a continuation of Example \ref{Ex:BS_TDA}.
  In~\cite[Section 6.4]{Anderson}, Anderson considers a particular three-dimensional Bott-Samelson variety $X$ for
  $GL(3,\CC)$ and an ample line bundle $\calL$ on $X$. 
  In local coordinates $(x,y,z)$ for $X$, the vector space \defcolor{$V$} of global sections of $\calL$ has basis
  $\{1,x,y,z,xz,yz,x(xz+y),y(xz+y)\}$.

  Anderson uses a valuation $\nu$ induced by the monomial valuation on $\CC[x,y,z]$ defined by $\nu(f)=(a,b,c)$, where
  $x^ay^bz^c$ is the monomial of 
    $f$ that is minimal in the degree lexicographic order with $x>y>z$.
  The image   $\calB= \{1s,xs,ys,zs,xzs,yzs,x(xz+y)s,y(xz+y)s\}$ of this basis in $Vs$ forms a 
  Khovanskii basis for $V$.
  The corresponding matrix of valuations is  
  \[
       \calA = \nu(\calB) = \left(\begin{matrix} 0 & 1 & 0 & 0 & 1 & 0 & 1 & 0 \\ 
                            			0 & 0 & 1 & 0 & 0 & 1 & 1 & 2 \\
                            			0 & 0 & 0 & 1 & 1 & 1 & 0 & 0 \\
                                                1 & 1 & 1 & 1 & 1 & 1 & 1 & 1 \end{matrix}\right),
 \]
  which is the matrix of Figure~\ref{F:ANP} after appending the row $\bbI$ for the exponents of $s$.
  The Newton-Okounkov body of $V$ is also displayed in Figure~\ref{F:ANP}.

  Anderson provides the Khovanskii basis $\calB$ for Algorithm~\ref{Alg:Khovanskii}, and  Example~\ref{Ex:BS_TDA}
    gives the general linear section $L$.
  For Step \ref{Alg:Khovanskii:Step2}, generators of $I_\calB$ are the generators of $I$ in
  Example~\ref{Ex:BS_TDA}.
  The weight vector $w=(1,1,1,-2)$ suffices for
  Step \ref{Alg:Khovanskii:Step3}.  
  The vector $w\calA = (-2,-1,-1,-1,0,0,0,0)$ appears as the weight in Example~\ref{Ex:BS_TDA}.
  The computations in Steps \ref{Alg:Khovanskii:Step4} and \ref{Alg:Khovanskii:Step5} are supplied
  by the elements in $\calG_t$ in Example~\ref{Ex:BS_TDA}.
  Finally, for Step \ref{Alg:Khovanskii:Step6}, the toric Kodaira map
  $\varphi_\calA$ is also given in Example~\ref{Ex:BS_TDA}.  \hfill$\diamond$
\end{example}

 \section{The Khovanskii homotopy for weighted projective space}\label{S:WPS}

When a Khovanskii basis $\calB$ for $V$ contains elements of degree greater than 1, Anderson's
toric degeneration naturally embeds into a weighted projective space~\cite{Anderson}.
We explain how to 
lift the degeneration to a toric degeneration in ordinary projective space and use the toric two-step homotopy
  (Algorithm~\ref{Alg:TDA}) to compute a linear section 
$\varphi_V(X)\cap L$ of the image of $X$ under the Kodaira map $\varphi_V\colon X\dasharrow\PP(V^*)$.

\subsection{Weighted projective spaces}\label{sec:weightedReview}
We recall the construction and some basic properties of weighted projective space, see~\cite{Dolgachev}.  
Suppose that $a=(a_0,\dots,a_{n+m})$ is a vector of mutually relatively prime positive integers.
The \demph{weighted projective space} \defcolor{$\PP^{n+m}_a$} is
$\Proj(\CC[x_0,\dots,x_{n+m}])$, where the grading on $\CC[x_0,\dots,x_{n+m}]$ is induced by setting the degree of $x_j$
to $a_j$.  
Equivalently, $\PP^{n+m}_a$ is the quotient of
$\CC^{n+m+1}\smallsetminus\{0\}$ by the 
$\CC^\times$-action where $t.(x_0,\dots,x_{n+m})=(t^{a_0}x_0,\dots,t^{a_{n+m}}x_{n+m})$, for $t\in\CC^\times$.
We may also construct $\PP^{n+m}_a$ as a quotient of $\PP^{n+m}$. 
To see this, let $\Delta\CC^\times\subset (\CC^\times)^{n+m+1}$ be the diagonal embedding of $\CC^\times$ and 
let $\defcolor{G_a}$ be the image of the following product of groups of roots of unity 
in the dense torus $(\CC^\times)^{n+m+1}/\Delta(\CC^\times)$ of $\PP^{n+m}$:
\[
  \mbox{Hom}\left(\prod_j \ZZ/a_j\ZZ,\CC^\times\right)
  = \prod_j  \mbox{Hom}\left(\ZZ/a_j\ZZ,\CC^\times\right)\subset
  (\CC^\times)^{n+m+1}.
\]
Thus $G_a$ acts faithfully on $\PP^{n+m}$.
As the $a_j$ are mutually relatively prime, $G_a$ is isomorphic to this product of groups of roots of unity.
Let $\defcolor{\pi}\colon\PP^{n+m}\rightarrow\PP^{n+m}_a$ be the quotient map by this $G_a$-action, which
is a finite map of degree $|G_a|=\prod a_j$.

The weighted projective spaces that appear in the Khovanskii homotopy have the following special form:
Let $W=\bigoplus_{k\geq 1} W_k$ be a finite-dimensional positively-graded vector space
with $\dim W_1=n{+}1\geq 1$.
Let $t\in\CC^\times$ act on $W_k$ as multiplication by $t^{-k}$, which gives a $\CC^\times$-action on $W$.
Identifying the dual space $W^*$ with $\bigoplus_{k} W_k^*$, in the dual action, $t\in\CC^\times$ acts on $W^*_k$ as
multiplication by $t^k$.
Then the quotient of $W^*\smallsetminus\{0\}$ by $\CC^\times$ is a weighted projective space.

We explicitly describe this weighted projective space.
Suppose that $\dim W=n{+}m{+}1$, and let $a=(a_0,\dotsc,a_{n+m})$ be a vector in which each
$k\in\NN$ occurs $\dim W_k$ times. 
Then $(W^*\smallsetminus\{0\})/\CC^\times$ is isomorphic to $\PP^{n+m}_a$, and we write \defcolor{$\PP_a(W^*)$} for this
quotient. 
The isomorphism depends upon the choice of an ordered basis for $W^*$ which is a union of bases
for each nontrivial summand $W^*_k$ such that $a_j=k$ when the $j$th basis element lies in $W^*_k$.
This choice of basis identifies $W^*$ with $\CC^{n+m+1}$, and allows us to define an action of $G_a$ on the projective
  space $\PP^{n+m}$ with quotient map $\pi\colon\PP^{n+m}\to\PP_a(W^*)$ as in the first paragraph above.
We remark that there is no natural identification of $\PP(W^*)$ with $\PP^{n+m}$ that is compatible with the map
$\pi$, unless $\dim W_k\leq 1$ for all $k>1$.

Let us write $V$ for $W_1$.
Under the $\CC^\times$-action given by the weight $a$, the composition $V\hookrightarrow W\twoheadrightarrow V$ of
the inclusion with the projection onto $V$ is the identity and each map is $\CC^\times$-equivariant.
Taking linear duals gives the equivariant composition $V^*\hookrightarrow W^*\twoheadrightarrow V^*$, and this
induces the composition $\PP(V^*)\hookrightarrow\PP_a(W^*)\dasharrow\PP(V^*)$.
We obtain ordinary projective space $\PP(V^*)$ because $t\in\CC^\times$ acts as multiplication by $t$ on $V^*$.
We write $\defcolor{\pr_a}$ for the projection map $\PP_a(W^*)\dasharrow\PP(V^*)$, which is undefined on the image
of the annihilator of $V$ in  $\PP_a(W^*)$.  
In addition, we write $\defcolor{\pr}$ for the composition $\pr_a\circ\pi$.
We summarize these maps in the following commutative diagram:
\[
  \begin{picture}(205,73)(-34,0)
  \put(-5,63){$G_a\curvearrowright\PP^{n+m}$}
   \multiput(62,4)(6,0){6}{\line(1,0){4}}  \put(97,2){\includegraphics{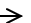}}

   \multiput(61,56)(6,-6){7}{\includegraphics{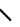}}
   \put(103,14){\includegraphics{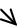}}

   \put(37,15){\line(0,1){44}}   \put(35,14.5){\includegraphics{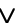}}

   \put(27,39){\scriptsize$\pi$}  \put(77,8){\scriptsize$\pr_a$} \put(90,39){\scriptsize$\pr$}
   \put(-23,1){$\PP_a^{n+m}\simeq\PP_a(W^*)$}
   \put(108,1){$\PP(V^*)\simeq\PP^n$.}

  \end{picture}
\]

Let $X\subset\PP(V^*)$ and $Z\subset\PP_a(W^*)$ be varieties such that $\pr_a$ is an isomorphism between $Z$ and $X$.  
In this case, a linear section $X\cap L$ pulls back along $\pr_a$ to $Z\cap\pr_a^{-1}(L)$.
We remark that the subvariety $\pr_a^{-1}(L)$, which is given by $d$ forms that are linear in
$x_0,\dotsc,x_n$, is not general.
For example, $\pr_a^{-1}(L)$ includes $\calV(x_0,\dotsc,x_n)$, which contains the singular locus of $\PP_a(W^*)$.  
We let $\defcolor{U}\subset\PP_a(W^*)$ be the open subset over which $\pi$ is a covering space.
For $u\in U$, $G_a$ acts freely on the fiber $\pi^{-1}(z)$. 
The following lemma relates $Z\cap\pr_a^{-1}(L)$ to $X\cap L$:

\begin{lemma}\label{L:ItAllWorksOut}
  Let $Z\subset\PP_a(W^*)$ be a subvariety of dimension $d$ such that $Z\cap U$ is dense in $Z$ and $\pr_a$ is an
  isomorphism between $Z$ and $\defcolor{X}\vcentcolon=\pr_a(Z)$.
  Let $\defcolor{Y}\vcentcolon= \pi^{-1}(Z)\subset\PP^{n+m}$ be its inverse image.
  Suppose that $L\subset\PP(V^*)$ is a general linear subspace of codimension $d$.
  Then,
  \begin{enumerate}[{\rm(\roman*)}]  
  \item $Z\cap\pr_a^{-1}(L)$ is transverse and $\pr_a\colon  Z\cap\pr_a^{-1}(L)\to X\cap L$ is a bijection.
    
  \item $Y\cap\pr^{-1}(L)$ is transverse and $\pi\colon Y\cap\pr^{-1}(L)\to Z\cap\pr_a^{-1}(L)$ is a $|G_a|$ to $1$
    surjection.
  \item For any component $Y'$ of $Y$, $\pi\colon Y'\cap\pr^{-1}(L)\to Z\cap\pr_a^{-1}(L)$ is a
    $|\operatorname{Stab}_{G_a}(Y')|$ to $1$ surjection.\label{propertiesL3}
    
  \end{enumerate}
\end{lemma}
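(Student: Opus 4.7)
The plan is to prove (i) using Bertini on $X$ and the isomorphism $\pr_a|_Z$, then deduce (ii) by arguing that a generic linear section of $X$ avoids the image of $Z\setminus U$ so that $\pi$ is étale over the section, and finally to derive (iii) from orbit-stabilizer applied to the $G_a$-action on the components of $Y$.

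For (i), I would observe that since $\pr_a\colon Z\to X$ is an isomorphism by hypothesis, $Z\cap\pr_a^{-1}(L)$ is identified with $X\cap L$ under $\pr_a|_Z$, giving the bijection immediately. Bertini applied to the $d$-dimensional $X\subset\PP(V^*)$ yields that for a general codimension-$d$ subspace $L$, the intersection $X\cap L$ is a set of $\deg X$ reduced points lying in the smooth locus of $X$; transferring through the isomorphism shows $Z\cap\pr_a^{-1}(L)$ is transverse.

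For (ii), I would first note $Y\cap\pr^{-1}(L)=\pi^{-1}(Z\cap\pr_a^{-1}(L))$, since $\pr=\pr_a\circ\pi$. The key step is to show $Z\cap\pr_a^{-1}(L)\subset U$ for generic $L$: as $Z\cap U$ is dense in $Z$, the complement $Z\setminus U$ is a proper closed subset, and via the isomorphism $\pr_a|_Z$ its image in $X$ is closed of dimension at most $d{-}1$. A general codimension-$d$ subspace $L\subset\PP(V^*)$ misses this lower-dimensional locus, so $X\cap L\subset\pr_a(Z\cap U)$ and hence $Z\cap\pr_a^{-1}(L)\subset U$. Over $U$, the map $\pi$ is an étale $G_a$-cover of degree $|G_a|$, so $\pi^{-1}(Z\cap\pr_a^{-1}(L))$ is reduced with each fiber of cardinality $|G_a|$, and transversality of $Y\cap\pr^{-1}(L)$ is inherited from (i) via étaleness.

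For (iii), I would decompose $Y=\bigcup_{g} g\cdot Y'$ as $g$ ranges over coset representatives of $\operatorname{Stab}_{G_a}(Y')$ in $G_a$. For any $z\in Z\cap\pr_a^{-1}(L)\subset U$ the fiber $\pi^{-1}(z)$ is a free $G_a$-orbit of size $|G_a|$; the points of this orbit lying in $Y'$ form a single $\operatorname{Stab}_{G_a}(Y')$-orbit, which has $|\operatorname{Stab}_{G_a}(Y')|$ elements by orbit-stabilizer. The hardest part will be the general-position argument in (ii) showing that a generic $L\subset\PP(V^*)$ pulls back to a section of $Z$ entirely contained in $U$; once this containment is established, the rest of the lemma follows from standard facts about étale covers and the orbit-stabilizer theorem.
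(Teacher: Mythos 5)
Your proposal is correct and follows essentially the same route as the paper's proof: identify $Z\cap\pr_a^{-1}(L)$ with $X\cap L$ via the isomorphism $\pr_a|_Z$, show the section lies inside $U$ so that $\pi$ is a free $G_a$-cover over it, and count fiber points in a component of $Y$ via the stabilizer. You are in fact somewhat more explicit than the paper at two points where it compresses the argument: the dimension-count showing that a general $L$ misses $\pr_a(Z\smallsetminus U)$ (which the paper dismisses with ``by our assumptions''), and the orbit-stabilizer count in (iii) (which the paper calls ``a counting argument''), so your write-up is a faithful expansion rather than a different proof.
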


We note that $Y=\pi^{-1}(Z)$ may not be irreducible.
Each irreducible component, however, maps surjectively onto $Z$.

\begin{proof}
  We address transversality after establishing the set-theoretic assertions.
  For $x\in X\cap L$, let $z$ be the unique point of $Z$ with $\pr_a(z)=x$.
  Since $z\in\pr_a^{-1}(L)$, this completes the proof of the first statement.

  Let $z\in Z\cap\pr_a^{-1}(L)$.  By our assumptions, $Z\cap\pr_a^{-1}(L)\subset U$, so $z\in U$.
  Then $\pi^{-1}(z)\subset Y\cap\pi^{-1}\pr_a^{-1}(L)=Y\cap\pr^{-1}(L)$.
  The second statement follows as $\pi\colon Y\to Z$ is $|G_a|$ to 1 over points of $U$.
  
  For the third statement, we observe that $\pr^{-1}(L)$ is invariant under the $G_a$-action.
  Therefore, for all $g\in G_a$, $g.(Y'\cap \pr^{-1}(L))=(g.Y')\cap\pr^{-1}(L)$.
  The claim follows from the second statement and a counting argument.

  For transversality, let $x\in X\cap L$.
  As $L$ is general, this intersection is transverse and the forms defining $L$ generate the maximal ideal in the local
  ring of $X$ at $x$. 
  Transversality in the first statement 
  follows since the map $\pr_a$ is an isomorphism between $Z$ and $X$
  and $\pr_a^{-1}(L)$ is defined by the same forms as $L$.
  Transversality in the second statement also follows, since the maximal ideal of
  $Y$ at $y$ is generated by the pull  back of the maximal ideal of $Z$ at $\pi(y)$  and $\pr_a^{-1}(x)\in U$.
\end{proof}

While $\pr^{-1}(L)$ is a linear subspace, it is not general.
We need a result similar to Lemma~\ref{L:ItAllWorksOut} for a general linear subspace $\Lambda\subset\PP^{n+m}$.
We note that since $\Lambda$ is general, $\pi^{-1}(\pi(\Lambda))$ consists of a union of
$|G_a|$ linear subspaces.

\begin{lemma}
  Let $Z\subset\PP_a(W^*)$ be a subvariety of dimension $d$ such that $Z\cap U$ dense in $Z$.
  Let $Y\vcentcolon= \pi^{-1}(Z)\subset\PP^{n+m}$ be its inverse image, and suppose that
  $\Lambda\subset \PP^{n+m}$ is a 
  general linear subspace of codimension $d$.
  Then, $\pi\colon Y\cap\Lambda\to Z\cap\pi(\Lambda)$ is a bijection.
\end{lemma}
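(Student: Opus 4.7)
The plan is to establish the bijection through three pieces: well-definedness and surjectivity are essentially formal; then generic avoidance removes the singular/ramification behavior; finally injectivity is established by an incidence-variety dimension count on the Grassmannian of codimension $d$ linear subspaces of $\PP^{n+m}$.

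First I would verify well-definedness and surjectivity directly. If $y\in Y\cap\Lambda$, then $\pi(y)\in Z$ and $\pi(y)\in\pi(\Lambda)$, so the map lands in $Z\cap\pi(\Lambda)$. Conversely, if $z\in Z\cap\pi(\Lambda)$, then $z=\pi(\lambda)$ for some $\lambda\in\Lambda$; since $z\in Z$, we have $\lambda\in\pi^{-1}(Z)=Y$, so $\lambda\in Y\cap\Lambda$ maps to $z$.

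Next, I would use the hypothesis that $Z\cap U$ is dense in $Z$ to argue that $\dim(Z\setminus U)<d$, and since $\pi$ is finite, $\dim\pi^{-1}(Z\setminus U)<d$. A general codimension $d$ linear subspace $\Lambda\subset\PP^{n+m}$ misses any fixed subvariety of dimension less than $d$, so $Y\cap\Lambda\subset\pi^{-1}(U)$. On $\pi^{-1}(U)$, the group $G_a$ acts freely.

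For injectivity, suppose $y_1,y_2\in Y\cap\Lambda$ with $\pi(y_1)=\pi(y_2)$. By free action on $\pi^{-1}(U)$, there is a unique $g\in G_a$ with $y_2=g\cdot y_1$, and we need $g=1$ for general $\Lambda$. For each fixed $g\in G_a\setminus\{1\}$, consider the incidence variety
\[
  \Sigma_g\;=\;\bigl\{(y,\Lambda)\in Y\times\mathrm{Gr}\;:\;y\in\Lambda\text{ and }g\cdot y\in\Lambda\bigr\},
\]
where $\mathrm{Gr}$ is the Grassmannian of codimension $d$ linear subspaces of $\PP^{n+m}$. The fiber of the projection to $Y$ over a point $y$ with $g\cdot y\neq y$ is a Schubert variety of codimension $2d$ in $\mathrm{Gr}$ (two linearly independent point-conditions), while over the locus $\{y=g\cdot y\}\subset Y\setminus\pi^{-1}(U)$ the fiber has codimension $d$. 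Since $\dim Y=d$ and $\dim(Y\setminus\pi^{-1}(U))<d$, both strata contribute dimension strictly less than $\dim\mathrm{Gr}$, so the image of $\Sigma_g$ in $\mathrm{Gr}$ is a proper closed subset. Taking the union over the finite set $G_a\setminus\{1\}$ still gives a proper closed subset, and a general $\Lambda$ avoids it.

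The main obstacle will be the injectivity step: a priori one might worry that for bad $\Lambda$ the translate $g\cdot\Lambda$ could share a large intersection with $\Lambda$ along $Y$, but the incidence-variety computation above shows this is a codimension $d$ condition on $\Lambda$ and hence avoided generically. Combining the three steps yields the claimed bijection.
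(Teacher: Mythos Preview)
Your proof is correct and follows essentially the same route as the paper's: well-definedness and surjectivity are handled identically, and injectivity reduces to the statement that for nontrivial $g\in G_a$ and general $\Lambda$ there is no $y\in Y\cap\Lambda$ with $g\cdot y\in\Lambda$. The paper phrases this as ``$Y\cap\Lambda\cap(g.\Lambda)$ is empty for general $\Lambda$'' and leaves the justification implicit, whereas you supply it via the incidence-variety dimension count on the Grassmannian; these are the same argument at different levels of detail.
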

\begin{proof}
 Since $\Lambda$ is general, $Z\cap\pi(\Lambda)\subset U$.
 Suppose that $q,q'\in Y\cap\Lambda$ are in the same fiber of $\pi$, and let
 $g\in G_a$ be defined by $q'=g.q$.
 Since $Y$ is $G_a$-invariant, we have $q'\in Y\cap (g.\Lambda)$.
 Since $\Lambda$ is general, $Y\cap \Lambda\cap (g.\Lambda)$ is empty unless $g$ is the identity.
 Therefore, $q=q'$, and we conclude that $\pi$ is injective on $Y\cap\Lambda$.  

 This map is also surjective.
 If $p\in Z\cap\pi(\Lambda)$, then there is a point $q\in \pi^{-1}(p)\cap\Lambda$.
 As $Y=\pi^{-1}(Z)$, it contains $\pi^{-1}(p)$ and thus $q\in Y\cap\Lambda$ and $\pi(q)=p$.
\end{proof}

\subsection{Khovanskii bases and the degeneration}

Let $X$ be a $d$-dimensional complex variety and $V\subset\CC(X)$ a finite-dimensional complex vector subspace.
Suppose that the image of $V^\times$ under $\nu$ generates $\ZZ^d$ and $V$ has a finite Khovanskii basis $\calB$ such that
$\calB\not\subset Vs$.
For each $k\in\NN$, let $\defcolor{W_k}s^k\vcentcolon=\operatorname{Span}(\calB\cap V^ks^k)\subset V^ks^k$ be the span of
the elements of $\calB$ of homogeneous degree $k$.
We define $W\vcentcolon=\bigoplus_{k\geq 1}W_k$ where $V=W_1$ and construct the corresponding weighted projective space as in
Section~\ref{sec:weightedReview}.
Anderson's toric degeneration~\cite{Anderson} naturally embeds into $\PP_a(W^*)$.
The weighted projective space $\PP_a(W^*)$ is needed (rather than $\PP(V^*)$) to accommodate the generators of
$\gr R(V)\simeq\CC[S(V,\nu)]$ which are not in $V$, as these are needed for embedding the toric fiber.

We introduce coordinates by ordering the elements of $\calB=\{b_0s^{a_0},\dotsc,b_{n+m}s^{a_{n+m}}\}$ where
$a_0=\dotsb=a_n=1$, and for $n<j\leq n{+}m$, $a_j>1$.
Necessarily, $\{b_0,\dotsc,b_n\}\subset V$, since $Vs$ generates $R(V)$.
Then, for each $n<j\leq n{+}m$, there is a homogeneous polynomial $\defcolor{h_j}\in\CC[z_0,\dotsc,z_n]$ of
degree $a_j$ such that $b_j=h_j(b_0,\dotsc,b_n)$.

Using the Khovanskii basis $\calB$, the
Kodaira map to $\PP_a(W^*)$ from $X=\Proj(R(V))$
is $\varphi_\calB\colon z\mapsto [b_0(z),\dotsc,b_{n+m}(z)]$.
Since, for $n<j\leq n+m$, $b_j=h_j(b_0,\dotsc,b_n)$, the  image of $\varphi_\calB$ is a graph over the the image of
$\varphi_V$ in $\PP(V^*)\subset\PP_a(W^*)$.

The constructions of $I_\calB$, $\calA$, $w$,
$\ini_{\calA}(I_\calB)$, and $\calG_t$ from Section~\ref{sec:embedding} all carry over to this general case since all of
these ideals are $a$-homogeneous.
Collectively, they embed Anderson's toric
degeneration into the weighted projective space $\PP_a(W^*)$.
The special fiber $\calX_0$ is a toric variety with ideal $\calG_0$ and toric Kodaira map $\varphi_{p,\calA}$, where
$p\in\calX_0\cap\TT_a$ (as before, the torus $\TT_a\subset\PP_a(W^*)$ consists of those points with nonzero  coordinates). 

We pull back the embedded toric degeneration $\calX\subset\PP_a(W^*)\times\CC_t$ along $\pi$
to obtain a flat family $\defcolor{\calY}\subset\PP^{n+m}\times\CC_t$ that is a toric degeneration in the sense of
Remarks~\ref{Ref:ToricComplex} and~\ref{Ref:toriccomplex:example} as $\calY$ or $\calY_0$ may not be irreducible.
We explain how the equations defining the family $\calY$ may be obtained.
Let $\CC[y_0,\dotsc,y_{n+m}]$ be the homogeneous coordinate ring of the projective space $\PP^{n+m}$.
The map $\pi\colon\PP^{n+m}\to\PP_a(W^*)$ corresponds to the map
$\pi^*\colon\CC[x_0,\dotsc,x_{n+m}]\to\CC[y_0,\dotsc,y_{n+m}]$
induced by $x_i\mapsto y_i^{a_i}$.
Let
 \begin{equation}\label{Eq:calF_t}
   \defcolor{\calF_t}\vcentcolon= \{ \pi^*(g_t) :  g_t\in\calG_t\}
 \end{equation}
be the pull back of the equations $\calG_t$ for the embedded degeneration $\calX\to\CC_t$.
Then $\calY=\calV(\calF_t)\subset\PP^m\times\CC_t$. 
This lifted family $\calY\to\CC_t$ is the fiberwise pull back of Anderson's toric degeneration $\calX\to\CC_t$ along the finite map $\pi$, where $G_a$ acts on $\calY$ fiberwise.

\subsection{Weighted Khovanskii homotopy}

We explain how to use the embedded degeneration $\calX$ in $\PP_a(W^*)$ to compute the linear section $\varphi_V(X)\cap L$.
Since $\calX_1=\varphi_\calB(X)$, it is natural to propose to compute $\calX_1\cap\pr_a^{-1}(L)$ using an adaptation of the 
linear section homotopy to weighted projective space by following points of $\calX_0\cap\pr_a^{-1}(L)$ along Anderson's
degeneration.
Unfortunately, $\pr_a^{-1}(L)$ is not sufficiently general for the toric special fiber in Anderson's
degeneration. 

To avoid this problem, we pull back the toric degeneration $\calX$ along $\pi$ to $\calY$ and
use a linear section homotopy to compute the linear section $\calY_1\cap\pr^{-1}(L)$.
Since $\pr^{-1}(L)$ is not a general linear subspace, we
instead choose a general linear subspace $\Lambda\subset\PP^{n+m}$ of codimension $d$.
Next, we use Algorithm~\ref{Alg:TDA} to compute $\calY_1\cap\Lambda$, which is a witness set for $\calY_1$.
Then, we use the witness set homotopy (Algorithm~\ref{Alg:WSH}) to compute  $\calY_1\cap\pr^{-1}(L)$.
Finally, $\varphi_V(X)\cap L$ is computed as $\pr(\calY_1\cap\pr^{-1}(L))$.

\begin{algorithm}[Weighted Khovanskii homotopy algorithm]\
\label{Alg:WeightedKhovanskii}

  {\bf Input:}   A finite-dimensional subspace $V\subset\CC(X)$ for a variety $X=\Proj(R(V))$ of

  \mbox{{\color{white}{\bf Input:}}}
  dimension $d$, finite Khovanskii basis $\calB\not\subset Vs$ for $V$, and a general linear

  \mbox{{\color{white}{\bf Input:}}} subspace $L\subset\PP(V^*)$ of codimension $d$. 

  {\bf Output:}
   Points in the linear section $\varphi_V(X)\cap L$ in the projective space $\PP(V^*)$.

\pagebreak
  {\bf Do:}
\begin{enumerate}[(\roman*)]
\item  Follow Steps \ref{Alg:Khovanskii:Step2} through~\ref{Alg:Khovanskii:Step6} of
  Algorithm~\ref{Alg:Khovanskii}, {\it mutatis mutandis}:
  The ideal $I_\calB$ is the kernel of the map $\CC[x_0,\dots,x_{n+m}]\rightarrow R(V)$ where $x_i\mapsto b_i s^{a_i}$.
  
\item Pull back the family $\calX$ along $\pi$ to compute the family $\calY$ defined by $\calF_t$,
  see Definition~\eqref{Eq:calF_t}.
  
\item Compute Kodaira maps for each irreducible component of $\calY_0$.\label{Alg:WeightedKhovanskii:Step3}
       
\item Let $\Lambda\subset\PP^{n+m}$ be a general linear subspace of codimension $d$ 
  and use Algorithm~\ref{Alg:TDA} to compute $\calY_1\cap\Lambda$.
  
\item Use Algorithm~\ref{Alg:WSH} to compute $\calY_1\cap\pr^{-1}(L)$.
\item Return $\varphi_V(X)\cap L=\pr(\calY_1\cap\pr^{-1}(L))$.
\end{enumerate}
\end{algorithm}

\begin{remark}\label{Rem:WPSAlgorithm}
  We discuss Step \ref{Alg:WeightedKhovanskii:Step3}
 of  Algorithm \ref{Alg:WeightedKhovanskii}.
 As $\calY_0$ may consist of several components and $\calY_0=\pi^{-1}(\calX_0)$, the group $G_a$ acts transitively on
 these components.
 Moreover, each component is a projective toric variety $X_{q,\calC}$ for a point $q\in\calY_0\cap\TT$ and
 all have the same set of exponents, which are the columns of matrix $\calC$.
 We explain how to compute both $q$ and $\calC$.

From Step~\ref{Alg:Khovanskii:Step6} of Algorithm~\ref{Alg:Khovanskii}, we have a toric Kodaira map
$\varphi_{p,\calA}\colon(\CC^\times)^d\to\PP^{n+m}_a$ such that $\calX_0=X_{p,\calA}$.
The image includes the point $p\in\calX_0\cap\TT_a$.
Points $q\in\pi^{-1}(p)$ are obtained by taking all $a_j$-th roots of the coordinate $p_j$ of
$p$, for all $j$,
\[
  \pi^{-1}(p)=\{q\in\PP^{n+m}: q_j^{a_j}=p_j \,\mbox{ for }\, j=0,\dotsc,n{+}m{+}1 \}.
\]
  
  It remains to determine the exponents $\calC$ for $\pi^{-1}(X_\calA)$.
  As in Remark~\ref{R:calBKodaira}, we have a basis $u_1,\dotsc,u_{n+m-d}\in\ZZ^{n+m+1}$ for $\ker(\calA)$. 
  These vectors give equations $x^{u_i}=1$ for $X_\calA\cap\TT_a$.
  Applying $\pi^*$ substitutes $y_j^{a_j}$ for $x_j$ and gives equations for $\pi^{-1}(X_\calA)\cap\TT$,
  \begin{equation}\label{Eq:calY_0}
    y^{v_i}=1\qquad i=1,\dotsc,n{+}m{-}d\,,
  \end{equation}
  where $v_i$ is obtained from $u_i$ by multiplying its $j$th coordinate by $a_j$.
  
  The System~\eqref{Eq:calY_0} for $\pi^{-1}(X_\calA)\cap\TT$ leads to equations for
  $\defcolor{Y}\vcentcolon=\pi^{-1}(X_\calA)$, which 
  form a lattice ideal~\cite[\sect\ 2]{EisenbudSturmfels} for the lattice $K$ spanned by $\{v_1,\dotsc,v_{n+m-d}\}$.
   That is, $Y=\calV\langle y^\alpha-y^\beta\mid \alpha-\beta\in K\rangle$.
    
  Let \defcolor{$(\gamma_1,\dotsc,\gamma_d,\bbI)$} be a basis for the annihilator of $K$ in $\ZZ^{n+m+1}$.
  Suppose that $\calC$ is the $d\times(n{+}m{+}1)$ matrix whose rows are $\gamma_1,\dotsc,\gamma_d$.
 Then $X_\calC$ is the component of  $Y$ containing the identity $\bbI\in\TT$.
 We remark that $\calC$ may be computed from $v_1,\dotsc,v_{n+m-d}$ using the Hermite normal form.
 All Kodaira maps needed in Step \ref{Alg:WeightedKhovanskii:Step3}
 of  Algorithm \ref{Alg:WeightedKhovanskii} can then be computed by translations. \hfill$\diamond$ 
\end{remark}

\begin{remark}
  The number of components of $\calY$ or of $\calY_0$ impacts the number of
   Kodaira maps needed in Step~\ref{Alg:WeightedKhovanskii:Step3} of  Algorithm \ref{Alg:WeightedKhovanskii}.
   Reductions in the number of Kodaira maps may significantly improve the efficiency of the
   algorithm. 

 When $\calY$ is known to be reducible, this structure may be exploited, as
  Statement \ref{propertiesL3} of Lemma \ref{L:ItAllWorksOut} implies that it is enough to apply Algorithm
  \ref{Alg:WeightedKhovanskii} to a single component of $\calY$.
  In particular,
   the map $\pi$ sends the curves in a linear section of one component onto
  $\calX\cap\pr^{-1}(L)$.

 When $\calY_0$ has fewer than $|G_a|$ components, 
then there are redundant Kodaira maps constructed in Remark \ref{Rem:WPSAlgorithm}.
More precisely, the number of redundant maps is the number of points of $\pi^{-1}(p)$ in a component of $\calY_0$.
We provide details on computing non-redundant Kodaira maps, assuming, as in Remark \ref{Rem:WPSAlgorithm}, that
  $\bbI\in\calY_0$.  The general case is obtained by translation.
    Let
  \[
\defcolor{\sat(K)}:= \{ w\in\ZZ^{n+m+1}: rw\in K\text{ for some }0\neq r\in\ZZ\}
  \]
be the saturation of $K$ and $M=\ker(\bbI)\subset\ZZ^{n+m+1}$.
  We note that $\sat(K)\subset M$.
  We identify $\TT$ with $\Hom(M,\CC^\times)$ so that $\calY_0\cap\TT=\Hom(M/K,\CC^\times)$, as these are the points
  satisfying System~\eqref{Eq:calY_0}.
  The component of $\calY_0\cap\TT$ containing the identity $\bbI\in\TT$ is $\Hom(M/\sat(K),\CC^\times)$, and the group of
  components of $\calY_0\cap\TT$ is $\Hom(\sat(K)/K,\CC^\times)$.
  Hence, the elements of $\Hom(\sat(K)/K,\CC^\times)$ generate Kodaira maps to distinct components of $\calY_0$.
   \hfill{$\diamond$}   
\end{remark}

\begin{proof}[Proof of correctness of Algorithm {\ref{Alg:WeightedKhovanskii}}]
  We need only show that the tracked paths provide enough points to compute
    $\varphi_V(X)\cap L$.
By Statement \ref{propertiesL3} of Lemma \ref{L:ItAllWorksOut}, for each $t$,
the map $\pi:\calY_t\cap\Lambda\rightarrow\calX_t\cap\pi(\Lambda)$ is a bijection.

The polyhedral homotopy correctly computes the points of $\calY_0\cap\Lambda$.
By Theorem~\ref{T:TDHA}, Algorithm~\ref{Alg:TDA} correctly computes the points of $\calY_1\cap\Lambda$.
Since the solution paths of the homotopy $\calX\cap\pi(\Lambda)$ are disjoint, the solution paths of
$\calY\cap\Lambda$ lie above paths of $\calX\cap\pi(\Lambda)$. 
In fact, by Statement \ref{propertiesL3} of Lemma \ref{L:ItAllWorksOut}, $\pi$ is a bijection between these sets of paths.
Therefore, there is a bijection between the ends of the homotopy paths of $\calY\cap\Lambda$ and points in
$\calX_1\cap\pi(\Lambda)$.
The correctness of the final computation then follows from the correctness of Algorithm \ref{Alg:WSH}.
\end{proof}

\begin{example}
 Let $V$ be the space of cubic polynomials in $\CC[x,y]$ which vanish at
 the  points $(4,4), (-3,-1), (-1,-1)$ and $(3,3)$.
 (This example is related to the example of~\cite[\sect~5.1]{DHS}, which considers quartics vanishing at these points.)
 Then $V$ is six-dimensional with a basis:
 \begin{multline*}
   \{b_0, \dots, b_5\} = \{\underline{xy}-y^2+x-y, \,\underline{x^2}-y^2+4x-4y,  \,\underline{y^3}-6y^2+5y+12, \\
   \underline{x y^2}-6y^{2}-x+6y+12,\, \underline{x^2y}-6y^{2}-4x+9y+12,\,\underline{x^3}-6y^2-13x+18y+12\}.
 \end{multline*}
 We compute a general linear section of $X = \Proj(R(V))$ in $\PP(V^*)=\PP^5$ with Algorithm~\ref{Alg:WeightedKhovanskii}.
 Let $\succeq$ be the order on $\ZZ^2$ where $(a,b)\succeq(c,d)$ if $a+b<c+d$ or else $a+b=c+d$ and $a<c$.
 Define a valuation $\nu$ on $\CC(X)=\CC(x,y)$ as follows: for $f\in\CC[x,y]$, $\nu(f)=(a,b)$ where $(a,b)$ is the
   $\succeq$-minimal exponent of a term of $f$.
 This order and valuation $\nu$ are compatible with the \texttt{grevlex} order $\leq$ on $\CC[x,y]$ with $x>y$ in
 that $(a,b)\succeq(c,d)$ if and only if $x^ay^b\leq x^cy^d$.
 Using the subduction algorithm, as implemented in the unreleased \texttt{Macaulay2} package
 \texttt{SubalgebraBases}~\cite{SubalgebraBasesPackage} 
 applied to $\{b_0s, \dots, b_5s\}$, we obtain a Khovanskii basis $\calB = \{b_0s, \dots, b_5s,\, b_6s^2, b_7s^3\}$ with 
 two additional generators, where 
 \begin{eqnarray*}
   b_6 &\vcentcolon=& \underline{xy^3}-y^4+10x^2y-26xy^2+16y^3+10x^2-15xy+5y^2+12x-12y, \ \mbox{ and}\\
   b_7 &\vcentcolon=& \underline{10x^4y}-49x^{3}y^{2}+89x^{2}y^{3}-71xy^{4}+21y^{5}+10x^4-18x^3y-18x^2y^2 \\
       &&+50xy^3-24y^4+31x^3-83x^2y+73xy^2-21y^3+24x^2-48xy+24y^2.
 \end{eqnarray*}
The corresponding matrix of valuations is
 \[
    \calA = \nu(\calB) = \left(\begin{matrix}
     1&2&0&1&2&3&1&4\\
     1&0&3&2&1&0&3&1\\
     1&1&1&1&1&1&2&3 \end{matrix}\right).
\]
The Newton-Okounkov body, as displayed in Figure~\ref{F:NOB}, is obtained by intersecting the cone generated by the
columns of $\calA$ with the hyperplane where the third coordinate is 1.
The vertices $(1/2,3/2)$ and $(4/3,1/3)$ come from the initial (underlined) terms of $b_6$ and $b_7$. 
While they are not integers, the Newton-Okounkov body has normalized volume 5, which is the degree of $X$.
We may interpret this volume as follows:
Two cubics drawn from $V$ meet in $5=3^2-4$ points outside the base locus $\calV(V)=\{(4,4), (-3,-1), (-1,-1), (3,3)\}$.

\begin{figure}[htb]
  \centering
  \begin{picture}(164,80)(-59,0)
      \put(0,0){\includegraphics{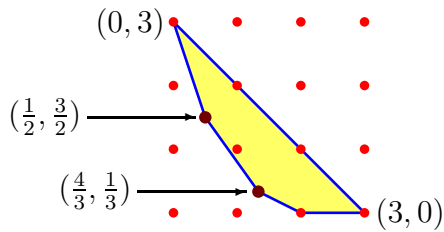}}
      \put(-26,72){$(0,3)$}
      \put(-29,39.5){\vector(1,0){41}}\put(-59,37){$(\frac{1}{2},\frac{3}{2})$}
      \put(-10,11.5){\vector(1,0){41}}\put(-40, 9){$(\frac{4}{3},\frac{1}{3})$}
      \put(80,0){$(3,0)$}
  \end{picture}
  \caption{Newton Okounkov body for the space of cubic polynomials vanishing at
     $(4,4), (-3,-1), (-1,-1)$ and $(3,3)$.}\label{F:NOB}
\end{figure}

The weight $w =(-6, -5,  0)$ is compatible with the \texttt{grevlex} order $\leq$ on $\calB$ in that for $b\in\calB$, 
the $\leq$-leading term has lowest $w$-weight, so that $\lt_\leq b = b_w$.
Choosing a term order on $\CC[x_0,\dotsc,x_7]$ that is compatible with $w\calA$, we use \texttt{Macaulay2} to compute a
Gr\"obner basis $\calG$ for $I_\calB$.
This basis consists of $17$ polynomials which are $\defcolor{a}\vcentcolon=(1,1,1,1,1,1, 2, 3)$-homogeneous.
Let $\CC^\times$ act on $a$-homogeneous polynomials using $w\calA$ in place of $w$ in Formula~\eqref{Eq:w-action}.
Then we compute $\calG_t\vcentcolon=\{g_t : g\in\calG\}$ as in Formula~\eqref{Eq:f_t}, which defines a flat family
$\calX\subset\PP^7_a\times\CC_t$ with toric special fiber $\calX_0$.
This family pulls back along $\pi\colon\PP^7\to\PP^7_a$ to a family $\calY\subset\PP^7\times\CC_t$.
The pull back  $\calY_0$ of $\calX_0$ under $\pi$ is a toric variety as it is irreducible.
From Remarks~\ref{R:calBKodaira} and \ref{Rem:WPSAlgorithm}, a Kodaira map for $\calY_0$ is 
 \begin{align*}\varphi_{p, \calA} \colon (\CC^*)^2 &\longrightarrow \PP^7 \\
   z &\longmapsto [z_1^6 z_2^3, \,z_1^4 z_2^3, \,z_1^6, \,z_1^4, \,z_1^2, \,1, \,z_1^7 z_2^3, \,
       \sqrt[3]{10} \, z_1^6 z_2^4].
\end{align*}

The polyhedral homotopy finds $30$ points in $\calY_0\cap\Lambda$.
An application of the toric two-step algorithm (Algorithm \ref{Alg:TDA}) tracks these points to
$\calY_1\cap {\Lambda}$ with no paths diverging.
Then, the witness set homotopy algorithm (Algorithm \ref{Alg:WSH}) moves $\Lambda$ to $\pr^{-1}(L)$ and finds the points of 
$\calY_1 \cap \pr^{-1}(L)$. 
These $30$ points project under $\pi\colon\PP^7\to\PP^7_a$ to five points in $\calX_1\cap\ pr^{-1}(L)$.
Finally, applying the map $\pr\colon\PP^7\to\PP^5=\PP(V^*)$ gives all five
points in $\varphi_V(X)\cap L$. \hfill$\diamond$
\end{example}

%

\section{Practical Considerations }\label{S:Exp}

We discuss how to compute a finite Khovanskii basis as well as options for tracking overdetermined homotopy systems.

\subsection{Computing a Khovanskii basis.}
Whether or not a given vector space $V$ of functions has a finite Khovanskii basis is generally not known and
depends on the choice of valuation.
Given $V$ and a valuation $\nu$, the subduction
algorithm~\cite[\algo~2.18]{KM_tropical} terminates and returns a finite Khovanskii basis, when one exists.
We only know of implementations when $V$ is a space of polynomials and $\nu$ is induced by a term
order~\cite{M2,SubalgebraBasesPackage}.
These also compute a SAGBI basis \cite{KM89,RS90}.

\subsection{Homotopy continuation for overdetermined systems.} 
Algorithms~\ref{Alg:Khovanskii} and~\ref{Alg:WeightedKhovanskii} generate a homotopy $(\calG_t,L)$
from a Gr\"obner basis $\calG$ defining $\calX$.
This is not typically square in that it has more equations than variables.
As most implementations of homotopy continuation, including the user-defined
homotopy in \texttt{Bertini}, 
require square systems, we need to choose a square subsystem for tracking from $t=0$ to $t=1$.

Typically, a square subsystem is obtained by taking linear combinations of elements in a given system.
  There is an alternative for equations $\calG_t$ from a toric degeneration.
  Let $\calA$ be the matrix of exponents defining the Kodaira map for the toric special fiber, $\calX_0$.
  The
  intersection $\calX_0\cap\TT$ with the dense torus of $\PP^n$ is the complete intersection defined by
  binomials $x^{u_i}-c_ix^{v_i}$ for $i=1,\dotsc,n{-}d$ such that $\{u_i-v_i : i=1,\dotsc,n{-}d\}$ form a basis for
  $\ker(\calA)$.
  The points of $\calX_0\cap L$ are smooth isolated solutions to the square system given by these binomials and
  the linear forms defining $L$.
 If we choose $\calF_t\subset\calG_t$ to consist of $n{-}d$ elements whose leading
  binomials are $x^{u_i}-c_ix^{v_i}$, then $(\calF_t,L)$ is a square subsystem of $(\calG_t,L)$ which
  defines curves containing  $\calX_0\cap L$ and,
  therefore, is sufficient for homotopy continuation.

\bibliographystyle{amsplain}
\bibliography{bibl}

\end{document}